
\documentclass{amsart}
\usepackage{amsfonts, amsbsy, amsmath, amssymb}

\makeatletter
\@namedef{subjclassname@2010}{%
  \textup{2020} Mathematics Subject Classification}
\makeatother

\ifx\fiverm\undefined 
  \newfont\fiverm{cmr5} 
\fi 

\input prepictex.tex
\input pictex.tex
\input postpictex.tex

\newtheorem{thm}{Theorem}[section]

\newtheorem{exmp}[thm]{Example}

\newtheorem{rmk}[thm]{Remark}

\newtheorem{ques}[thm]{Question}
\newtheorem{res}[thm]{Result}
\numberwithin{equation}{section}

\theoremstyle{definition}

\allowdisplaybreaks

\newcommand{\f}{\Bbb F}


\begin{document}

\title{New Results on Permutation Binomials of Finite Fields}

\author[Xiang-dong Hou]{Xiang-dong Hou}
\address{Department of Mathematics and Statistics,
University of South Florida, Tampa, FL 33620}
\email{xhou@usf.edu}

\author[Vincenzo Pallozzi Lavorante]{Vincenzo Pallozzi Lavorante}
\address{Universit\`a degli Studi di Modena e Reggio Emilia, Italy}
\email{vincenzo.pallozzilavorante@unimore.it}

\keywords{finite field, Hasse–Weil bound, permutation binomial}

\subjclass[2010]{11T06, 11T55, 14H05}

\begin{abstract}
After a brief review of existing results on permutation binomials of finite fields, we introduce the notion of equivalence among permutation binomials (PBs) and describe how to bring a PB to its canonical form under equivalence. We then focus on PBs of $\Bbb F_{q^2}$ of the form $X^n(X^{d(q-1)}+a)$, where $n$ and $d$ are positive integers and $a\in\Bbb F_{q^2}^*$. Our contributions include two nonexistence results: (1) If $q$ is even and sufficiently large and $a^{q+1}\ne 1$, then $X^n(X^{3(q-1)}+a)$ is not a PB of $\Bbb F_{q^2}$. (2) If $2\le d\mid q+1$, $q$ is sufficiently large and $a^{q+1}\ne 1$, then $X^n(X^{d(q-1)}+a)$ is not a PB of $\Bbb F_{q^2}$ under certain additional conditions. (1) partially confirms a recent conjecture by Tu et al. (2) is an extension of a previous result with $n=1$.
\end{abstract}

\maketitle

\section{Introduction}

Let $\f_q$ be the finite field with $q$ elements and $\f_q^*$ be its multiplicative group. A polynomial $f\in\f_q[X]$ is called a permutation polynomial (PP) of $\f_q$ if it induces a permutation of $\f_q$. A permutation binomial (PB) of $\f_q$ is a PP of the form $aX^m+bX^n$, where $a,b\in\f_q^*$, $m\not\equiv 0$, $n\not\equiv 0$ and $m\not\equiv n\pmod{q-1}$. Permutation binomials are an active topic that has attracted much attention. We refer the reader to \cite{Hou-CM-2015} for a survey on PBs and to \cite{Hou-FFA-2015a} for a survey on PPs. Permutation binomials are complex objects; in general, one can not expect a simple criterion on the parameters $q,m,n,a,b$ for $aX^m+bX^n$ to be a PB of $\f_q$. In this paper, we focus on PBs of $\f_{q^e}$ of the form
\begin{equation}\label{f}
f_{q,e,n,d,a}(X)=X^n(X^{d(q-1)}+a)\in\f_{q^e}[X],
\end{equation}
where $n,d\in\Bbb Z^+$, $n\not\equiv 0$, $d(q-1)\not\equiv 0$, $n+d(q-1)\not\equiv 0\pmod{q^e-1}$, and $a\in\f_{q^e}^*$. Here is an overview of current knowledge on such PBs.

\begin{res}[{\cite[Corollary~5.3]{Zieve-arXiv1310.0776}}]\label{R1.1}
Assume $e=2$ and $a^{q+1}=1$. Then $f_{q,2,n,d,a}=X^n(X^{d(q-1)}+a)$ is a PB of $\f_{q^2}$ if and only if $\text{\rm gcd}(n,q-1)=1$, $\text{\rm gcd}(n-d,q+1)=1$ and $(-a)^{(q+1)/\text{\rm gcd}(q+1,d)}\ne 1$.
\end{res}

\begin{res}[{\cite[Theorem~A]{Hou-FFA-2015b}}]\label{R1.2}
Assume $e=2$, $n=1$, $d=2$ and $a^{q+1}\ne 1$. Then $f_{q,2,1,2,a}=X(X^{2(q-1)}+a)$ is a PB of $\f_{q^2}$ if and only if $q$ is odd and $(-a)^{(q+1)/2}=3$.
\end{res}

\begin{res}[{\cite[Theorem~1.1]{Hou-FFA-2018}}]\label{R1.3}
Assume $e=2$, $n=1$, $d>2$, $a^{q+1}\ne 1$, and $q$ is large relative to $d$. Then $f_{q,2,1,d,a}=X(X^{d(q-1)}+a)$ is not a PB of $\f_{q^2}$.
\end{res}

\begin{res}[\cite{Lappano-thesis, Lappano-pc}]\label{R1.4}
Assume $e=2$, $n=3$, $d=2$ and $a^{q+1}\ne 1$. Then $f_{q,2,3,2,a}=X^3(X^{2(q-1)}+a)$ is a PB of $\f_{q^2}$ if and only if $q$ is odd, $q\equiv -1\pmod 3$ and $(-a)^{(q+1)/2}=1/3$. 
\end{res}

\begin{res}[{\cite[Theorem~1]{Tu-Zeng-Jiang-Li-DCC-2021}}]\label{R1.5}
Assume $e=2$, $q=2^{2m}$ and $d=3$. Then $f_{q,2,n,3,a}=X^n(X^{3(q-1)}+a)$ is a PB of $\f_{q^2}$ if and only if $\text{\rm gcd}(n,q-1)=1$, $n\equiv 3\pmod{q+1}$ and $a^{q+1}\ne 1$. 
\end{res}

\noindent
(Note: In the original statement of Result~\ref{R1.5} in \cite{Tu-Zeng-Jiang-Li-DCC-2021}, it is assumed that $m\ge 2$. However, the result also holds for $m=1$; see Example~\ref{Tu-Zeng}.)

\begin{res}[{\cite[Theorem~4.2]{Hou-arXiv:1609.03662}}]\label{R1.6}
Assume $e=2$ and $d=1$. Then $f_{q,2,n,1,a}=X^n(X^{q-1}+a)$ is a PB of $\f_{q^2}$ if and only if $\text{\rm gcd}(n,q-1)=1$, $n\equiv 1\pmod{q+1}$ and $a^{q+1}\ne 1$. 
\end{res}

\begin{res}[\cite{Masuda-Rubio-Santiago-arXiv:2009.10851}]\label{R1.7}
Assume $e\ge 2$, $d=1$ and $n<q^e-q$. For the special cases $(q,e)=(q,2), (q,3), (q,4), (p,5), (p,6)$, where $p$ is a prime, the following statement is true: If $f_{q,e,n,1,a}=X^n(X^{q-1}+a)$ is a PB of $\f_{q^e}$, then $f_{q,e,n,1,a}\equiv X^{nq^h}+aX^n\pmod{X^{q^e}-X}$ for some integer $h>0$. It is conjectured that the statement is true for all $q$.
\end{res}

\noindent
(Note: In Result~\ref{R1.7}, when $q=2$, $f_{2,e,n,1,a}=X^n(X+a)$ is never a PB of $\f_{2^e}$, so the statement is vacuously true.)

\medskip

Through these results, we begin to understand the roles played by the parameters in the PBs of the form \eqref{f}. At the same time, as more results on PBs gather, one feels a need for a properly defined notion of {\em equivalence} of PBs that allows us to categorize existing results and channel future efforts to PBs that are new under equivalence. Section~2 is included for this purpose. We define the equivalence among all PBs (not just those of the form \eqref{f}). We show that every PB can be brought to a canonical form which is uniquely determined by a triple of invariants. In particular, we see that the PB in Result~\ref{R1.4} is equivalent to a PB in Result~\ref{R1.2} and the PB in Result~\ref{R1.5} is equivalent to a PB in Result~\ref{R1.6}.

Regarding Result~\ref{R1.5}, if we assume $e=2$, $q=2^{2m+1}$, $d=3$ and $a^{q+1}\ne 1$, \cite{Tu-Zeng-Jiang-Li-DCC-2021} conjectured that $f_{q,2,n,3,a}=X^n(X^{3(q-1)}+a)$ is not a PB of $\f_{q^2}$ and provided strong evidence for this conjecture. Note that in this case, $d\mid q+1$. As we will see in Section~2, when the PB in \eqref{f} is brought to its canonical form, we always have $d\mid (q^e-1)/(q-1)$.

Let us further focus on the case $e=2$, and we assume $d\mid q+1$ by the above comment. In this case, if $a^{q+1}=1$ or $d=1$, all PBs are given by Results~\ref{R1.1} and \ref{R1.6}. Therefore, we assume $e=2$, $2\le d\mid q+1$ and $a^{q+1}\ne 1$. Under these assumptions and up to equivalence, the PBs in Result~\ref{R1.2} form the only known class that contains infinitely many $q$'s. This leads to the following question.

\begin{ques}\label{Q1.8}
Fix integers $n\ge 1$ and $d\ge 2$. If there are infinitely many pairs $(q,a)$ such that $d\mid q+1$, $a\in\f_{q^2}^*$, $a^{q+1}\ne 1$, and $f(X)=f_{q,2,n,d,a}(X)=X^n(X^{d(q-1)}+a)$ is a PB of $\f_{q^2}$, can we conclude that when $q$ is sufficiently large, $f$ is equivalent the PB in Result~\ref{R1.2}?
\end{ques}

In this paper, we prove two nonexistence results that support an affirmative answer to the above question.

\begin{thm}\label{T1.9}
Let $q=2^m$, $n\ge 1$ and $a\in\f_{q^2}^*$ be such that $q\ge (2\max\{n,6-n\})^4$ and $a^{q+1}\ne 1$. Then $f(X)=f_{q,2,n,3,a}(X)=X^n(X^{3(q-1)}+a)$ is not a PB of $\f_{q^2}$.
\end{thm}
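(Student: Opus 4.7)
The plan is to contradict $f$ being a PB by producing a collision of an auxiliary rational map on $\mu_{q+1}$, using the Hasse-Weil bound on an associated plane curve over $\mathbb{F}_q$, following the template of the proof of Result~\ref{R1.3}. By the Park-Lee/Zieve criterion applied to $f(X) = X^n h(X^{q-1})$ with $h(Y) = Y^3 + a$, $f$ permutes $\mathbb{F}_{q^2}$ only if $\gcd(n, q-1) = 1$ and the map $\bar g(\xi) := \xi^n(\xi^3 + a)^{q-1}$ is a bijection of $\mu_{q+1}$. Since $\bar\xi = 1/\xi$ for $\xi \in \mu_{q+1}$, we rewrite $\bar g(\xi) = \xi^{n-3}(1 + b\xi^3)/(\xi^3 + a)$ with $b = a^q$; as a rational function this has degree $D := \max\{n, 6 - n\}$. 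The case $n = 3$ is disposed of immediately: when $3 \mid q + 1$ (i.e., $m$ odd) we have $\bar g(\omega\xi) = \omega^{n-3}\bar g(\xi) = \bar g(\xi)$ for any primitive cube root $\omega \in \mu_{q+1}$, and when $3 \mid q - 1$ (i.e., $m$ even) we have $\gcd(n, q-1) = 3 \neq 1$. Assume $n \neq 3$ henceforth.

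The equation $\bar g(\xi) = \bar g(\eta)$ for $\xi \neq \eta$ in $\mu_{q+1}$, after clearing denominators and dividing out the diagonal factor $\xi - \eta$, reduces to $M(\xi, \eta) = 0$ for a symmetric polynomial $M \in \mathbb{F}_{q^2}[\xi, \eta]$ of bidegree $(D-1, D-1)$. Fix $\theta \in \mathbb{F}_{q^2} \setminus \mathbb{F}_q$ and define $\phi(s) := (s + \theta)/(s + \theta^q)$; then $\phi \colon \mathbb{P}^1(\mathbb{F}_q) \to \mu_{q+1}$ is a bijection. Set $\tilde M(s, t) := (s + \theta^q)^{D-1}(t + \theta^q)^{D-1} M(\phi(s), \phi(t))$. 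The Frobenius identity $\overline{M(\xi, \eta)} = (\xi\eta)^{1-D} M(\xi, \eta)$ on $\mu_{q+1}^2$, equivalent to the coefficient symmetry $\overline{c_{ij}} = c_{D-1-i,\, D-1-j}$ for $M = \sum c_{ij}\xi^i\eta^j$, combined with the Frobenius exchange $(s + \theta) \leftrightarrow (s + \theta^q)$ for $s \in \mathbb{F}_q$, implies $\tilde M^{(q)} = \tilde M$, so $\tilde M \in \mathbb{F}_q[s, t]$. Thus $\tilde M = 0$ defines a plane curve $\mathcal{C}$ over $\mathbb{F}_q$ of bidegree $(D-1, D-1)$ whose $\mathbb{F}_q$-rational points biject with pairs $(\xi, \eta) \in \mu_{q+1}^2$ satisfying $M(\xi, \eta) = 0$.

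The main obstacle is showing that $\mathcal{C}$ contains an absolutely irreducible component $\mathcal{C}_0$ defined over $\mathbb{F}_q$ of arithmetic genus at most $(D - 2)^2$ (the bound for a smooth bidegree $(D-1, D-1)$ curve on $\mathbb{P}^1 \times \mathbb{P}^1$). This requires a careful analysis of the possible factorizations of $M$; the hypothesis $n \neq 3$ is used to exclude the cube-root symmetry responsible for the reducible case $M = ab(\xi^2 + \xi\eta + \eta^2)$ at $n = 3$, and more generally to rule out linear factors of the form $\eta - \omega\xi$ with $\omega^3 = 1$ when $3 \mid n$. Granted this, Hasse-Weil gives $|\mathcal{C}_0(\mathbb{F}_q)| \geq q + 1 - 2(D - 2)^2\sqrt q - O(D^2)$, the error term absorbing contributions from singularities and the boundary. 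Under $a^{q+1} \neq 1$, the restriction $M(\xi, \xi)$ is a nonzero polynomial of degree $O(D)$ in $\xi$, so the diagonal $s = t$ contributes only $O(D)$ points. The hypothesis $q \geq (2D)^4$ then makes $q + 1 - 2(D-2)^2\sqrt q$ dominate the $O(D^2)$ trivial contributions, forcing an off-diagonal $\mathbb{F}_q$-point of $\mathcal{C}_0$. This yields $\xi \neq \eta$ in $\mu_{q+1}$ with $\bar g(\xi) = \bar g(\eta)$, contradicting that $\bar g$ permutes $\mu_{q+1}$, and hence $f$ is not a PB.
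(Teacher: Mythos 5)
Your outline reproduces the paper's strategy (reduce to a degree-$D$ rational map on $\mu_{q+1}$, transfer to a curve over $\f_q$, apply Hasse--Weil, and force an off-diagonal collision), but it leaves unproved exactly the step that constitutes essentially the entire proof in the paper: the absolute irreducibility of the symmetrized polynomial $M$ (equivalently, the existence of your absolutely irreducible component $\mathcal C_0$ over $\f_q$ of controlled genus). You acknowledge this as "the main obstacle" and then write "granted this," but nothing in the proposal supplies the argument. The remark that $n\ne 3$ rules out linear factors $\eta-\omega\xi$ with $\omega^3=1$ does not come close to excluding all factorizations: the paper's proof writes the homogenization as $N^*(X,Y,Z)=Q(Z^3)$ with $Q$ quadratic in $Z$ over $\overline\f_q[X,Y]$, proves $Q$ is primitive in $Z$ via a gcd computation, and then establishes the two tower steps $[\overline\f_q(X,Y,z):\overline\f_q(X,Y)]=2$ and $[\overline\f_q(X,Y,w):\overline\f_q(X,Y,z)]=3$ by explicit Artin--Schreier-type and radical-extension computations (equations \eqref{A(A+B)}--\eqref{a^2} and \eqref{case2-N}--\eqref{case2-a2e3}). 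These computations, not the generic Hasse--Weil counting, are where the work lies, and they use the hypothesis $a^{q+1}\ne 1$ repeatedly and essentially (e.g., the coefficient of $XY^{n-2}$ in $B$, and the terminal contradictions $a^{1+q}=1$). Your proposal instead locates the role of $a^{q+1}\ne 1$ only in keeping the diagonal $M(\xi,\xi)$ nonzero, which misidentifies where that hypothesis actually bites.

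Two smaller points. First, your disposal of $n=3$ is fine but unnecessary: by the Park--Lee/Zieve criterion one needs $\gcd(n,3(q-1))=1$, which already excludes $3\mid n$; the paper also dispatches the case $m$ even wholesale via Result~\ref{R1.5} (forcing $n\equiv 3\pmod{q+1}$, hence $n=3$ or $n\ge q+4$), whereas your proposal never addresses $m$ even for $n\ne 3$ --- harmless if the curve argument is carried out uniformly, but worth noting since the paper's subsequent analysis is set up under $3\mid q+1$. Second, the logical packaging differs slightly: the paper proves "PB $\Rightarrow N(G)$ reducible over $\overline\f_q$" (Theorem~\ref{HW}) and then derives a contradiction by proving irreducibility, while you argue in the forward direction from an irreducible component; the two are equivalent here, but in either formulation the irreducibility claim must actually be proved, and it is not.
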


Theorem~\ref{T1.9} proves the conjecture of \cite{Tu-Zeng-Jiang-Li-DCC-2021} when $q$ is large relative to $n$.

\begin{thm}\label{T1.10}
Let $n\ge 1$, $d\ge 2$ and $a\in\f_{q^2}^*$ be such that $d\mid q+1$, $q\ge (2\max\{n,2d-n\})^4$ and $a^{q+1}\ne 1$. Then $f(X)=f_{q,2,n,d,a}(X)=X^n(X^{d(q-1)}+a)$ is not a PB of $\f_{q^2}$ if one of the following conditions is satisfied.
\begin{itemize}
\item[(i)] $d-n>1$ and $\text{\rm gcd}(d,n+1)$ is a power of $2$.

\item[(ii)] $d+2\le n<2d$ and $\text{\rm gcd}(d,n-1)$ is a power of $2$. 

\item[(iii)] $n\ge 2d$, $\text{\rm gcd}(d,n-1)$ is a power of $2$, and $\text{\rm gcd}(n-d,q-1)=1$.
\end{itemize}
\end{thm}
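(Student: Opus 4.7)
My plan is a Hasse--Weil argument on a plane curve encoding the collisions of the auxiliary map
\[
g\colon \mu_{q+1} \to \mu_{q+1}, \qquad g(u) = u^n(u^d + a)^{q-1}.
\]
By Zieve's criterion, since $d \mid q+1$, $f$ is a PB of $\f_{q^2}$ if and only if $\gcd(n, q-1) = 1$ and $g$ is a bijection of $\mu_{q+1}$. I assume $g$ is a bijection and seek a contradiction by exhibiting $u \in \mu_{q+1}$ and $t \in \mu_{q+1}\setminus\{1\}$ with $g(u) = g(ut)$. Using $w^q = w^{-1}$ on $\mu_{q+1}$ and setting $s = u^d$, the equation $g(u) = g(ut)$ reduces, after clearing the diagonal factor $1 - t$, to
\[
\mathcal{G}_a(s, t)\colon\; A(t)\, s^2 + B(t)\, s + C(t) = 0,
\]
where $A, B, C \in \f_{q^2}[t]$ are explicit $\f_{q^2}$-linear combinations of monomials in $t$ and of $\Psi_k(t) := (t^k-1)/(t-1)$ (the indices $k$ depending on which of (i)--(iii) applies), all of $t$-degree at most $\max\{n, 2d-n\}$. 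A collision then corresponds to a point $(s_0, t_0) \in \f_{q^2}^2$ of $\mathcal{G}_a$ with $s_0 \in \mu_{(q+1)/d}$ and $t_0 \in \mu_{q+1}\setminus\{1\}$.

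To invoke Hasse--Weil, I lift via $s = u^d$ with $u \in \mu_{q+1}$ and parametrize $\mu_{q+1}\setminus\{1\}$ by $\f_q$ using the Möbius transformation $\phi(y) := (y - \gamma^q)/(y - \gamma)$ for some fixed $\gamma \in \f_{q^2} \setminus \f_q$, applied separately to $u$ and $t$. Substituting $u = \phi(y)$, $t = \phi(z)$ into $A(t)u^{2d} + B(t) u^d + C(t) = 0$ and clearing denominators produces a plane curve $\mathcal{H} \subset \mathbb{A}^2$ of degree $D$ bounded in terms of $\max\{n, 2d-n\}$. A direct calculation using $a^{q^2} = a$ and $\Psi_k(1/t) = t^{-(k-1)}\Psi_k(t)$ yields the self-dual identity $\mathcal{G}_a^{(q)}(s, t) = s^2 t^{e}\mathcal{G}_a(s^{-1}, t^{-1})$ for a suitable integer $e$; combined with the property $\phi^{(q)} = 1/\phi$ of the Möbius $\phi$, this forces $\mathcal{H}^{(q)}(y, z) = \mathcal{H}(y, z)$, so $\mathcal{H}$ is in fact defined over $\f_q$.

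The decisive step, which I expect to occupy most of the technical work, is proving that $\mathcal{H}$ has an absolutely irreducible $\f_q$-component. Since $\mathcal{G}_a$ is quadratic in $s$, reducibility of the $(s, t)$-curve can arise only from either (a) a nonconstant $\gcd(A, B, C)$ in $\f_{q^2}[t]$, or (b) the discriminant $\Delta(t) := B(t)^2 - 4A(t)C(t)$ being a square (up to a unit) in $\overline{\f_{q^2}}[t]$. The hypotheses that $\gcd(d, n \pm 1)$ is a power of $2$ (and $\gcd(n-d, q-1) = 1$ in case (iii)) are tailored to constrain the common roots of the polynomials $\Psi_{n-d}, \Psi_{2d-n}, \Psi_n$ with each other and with the monomial factors $t^d, t^{d-n}, t^{n-d}$, thereby eliminating (a); and the hypothesis $a^{q+1} \ne 1$, combined with a multiplicity analysis at the roots of $\Psi_{n-d}$ (which appears as a factor of both $A$ and $C$), is used to eliminate (b). A further check is that the $d$-to-$1$ lift $s = u^d$ introduces no additional factorization, which also follows from the gcd conditions. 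Once absolute irreducibility is established, the Hasse--Weil bound yields
\[
|\mathcal{H}(\f_q)| \geq q + 1 - (D-1)(D-2)\sqrt{q},
\]
and the hypothesis $q \geq (2\max\{n, 2d-n\})^4$ ensures this exceeds the $O(\max\{n, 2d-n\})$ spurious $\f_q$-points (diagonal $y = z$, poles of $\phi$, and the zero locus of $A\circ\phi$), producing the desired collision and contradicting that $g$ is a bijection.
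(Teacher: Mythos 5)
Your overall strategy---encode collisions of $G$ on $\mu_{q+1}$ as a plane curve, prove absolute irreducibility, and apply Hasse--Weil under $q\ge(2\max\{n,2d-n\})^4$---is the same as the paper's, and your quadratic $A(t)s^2+B(t)s+C(t)=0$ in $s=u^d$ is exactly the paper's $Q(Z)$ after dehomogenization; the M\"obius parametrization and descent to $\f_q$ is the content of Theorem~\ref{HW}, handled there by citing \cite[Lemmas 3.1, 3.2]{Hou-FFA-2018}. The problem is the step you defer as ``a further check,'' namely that the $d$-to-$1$ lift $s=u^d$ introduces no additional factorization. In the paper this is the claim $[\overline\f_q(X,Y,w):\overline\f_q(X,Y,z)]=d$, equivalently that $z$ is not a $t$th power in $\overline\f_q(X,Y,z)$ for any prime $t\mid d$, and it is by far the hardest part of the argument: one computes the valuations of $z$ and $\bar z$ at a place lying over a primitive root of $X^{n-d}-1$ (resp.\ $X^{d-n}-1$), deduces $t\mid p^m-1$ and hence $t\mid\gcd(d,n\mp1)$, so $t=2$ by the power-of-$2$ hypothesis; then one solves the resulting quadratic explicitly, shows that a certain self-reciprocal polynomial $h$ would have to be a perfect square $g^2$, and refutes this via the reciprocal-polynomial computation $g\mid \epsilon K+K^*$ and a degree count. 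None of this ``follows from the gcd conditions'' in any routine way, and asserting it leaves the decisive step unproved.

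Relatedly, your attribution of which hypotheses eliminate which obstruction does not match where they are actually needed. Primitivity of the quadratic (your obstruction (a)) is proved using only $a^{q+1}\ne1$ and $\gcd(n,d)=1$; the discriminant-not-a-square step (your obstruction (b)) is a coefficient comparison, and it is here---in case (iii) with $q$ odd---that $\gcd(n-d,q-1)=1$ is used, via the relation $a^{(q+1)(n-d)}=1$ forcing $a^{q+1}=1$. The conditions that $\gcd(d,n+1)$ or $\gcd(d,n-1)$ be a power of $2$ play no role in (a) or (b); they are consumed entirely by the Kummer-lift step you skipped. So while the architecture of your proposal is sound and matches the paper's, the proof as written has a genuine gap precisely at the point where the theorem's distinctive hypotheses must do their work.
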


\begin{rmk}\label{Rmk1.11}\rm
In Theorem~\ref{T1.10}, one can replace the assumption that $d\mid q+1$ with $\text{gcd}(n,d)=1$. If the $f$ in Theorem~\ref{T1.10} is a PB of $\f_{q^2}$, then $d\mid q+1$ implies $\text{gcd}(n,d)=1$. However, as we will see in Section~4, the proof of Theorem~\ref{T1.10} only uses $\text{gcd}(n,d)=1$. Moreover, the assumption that $\text{gcd}(n,d)=1$ causes no loss of generality. If $f_{q,2,n,d,a}$ is a PB of $\f_{q^2}$ with $\text{gcd}(n,d)=\delta$, then $\text{gcd}(\delta,q^2-1)=1$. Let $\delta'\in\Bbb Z^+$ be such that $\delta\delta'\equiv 1\pmod{q^2-1}$. Then
\[
f_{q,2,n,d,a}(X^{\delta'})\equiv f_{q,2,n/\delta,d/\delta,a}(X)\pmod{X^{q^2}-X},
\]
where $\text{gcd}(n/\delta,d/\delta)=1$.
\end{rmk}

Result~\ref{R1.3} is a special case of Theorem~\ref{T1.10} (i) with $n=1$. Although the conditions in Theorem~\ref{T1.10} are rather restrictive, they do cover many parameters that were not investigated previously. For example, (i) is satisfied for all $d>n+1$ with $\text{gcd}(d,n+1)=1$.

Theorems~\ref{T1.9} and \ref{T1.10} are proved in Sections~3 and 4, respectively. The method is similar to that in \cite{Hou-FFA-2018}. Here we recall the basic strategy.

Let
\begin{equation}\label{fq2}
f(X)=f_{q,2,n,d,a}(X)=X^n(X^{d(q-1)}+a)\in\f_{q^2}[X],
\end{equation}
where $n\ge 1$, $2\le d\mid q+1$ and $a\in\f_{q^2}^*$. The following theorem follows from a well-known folklore \cite{Park-Lee-BAMS-2001, Wang-LNCS-2007, Zieve-PAMS-2009}.

\begin{thm}\label{folklore}
The binomial $f(X)$ in \eqref{fq2} is PB of $\f_{q^2}$ if and only if 
\begin{itemize}
\item[(i)] $\text{\rm gcd}(n,d(q-1))=1$ and
\item[(ii)] $X^n(X^d+a)^{q-1}$ permutes $\mu_{q+1}:=\{x\in\f_{q^2}^*: x^{q+1}=1\}$.
\end{itemize}
\end{thm}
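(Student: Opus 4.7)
The plan is to apply the classical folklore permutation criterion of Park--Lee, Wang, and Zieve, which I will use in the following form: for a prime power $Q$, positive integers $r,s$ with $s\mid Q-1$, and $h\in\f_Q[X]$, the polynomial $X^rh(X^s)$ permutes $\f_Q$ if and only if $\gcd(r,s)=1$ and $X^rh(X)^s$ permutes the subgroup $\mu_{(Q-1)/s}\subseteq\f_Q^*$ of $(Q-1)/s$-th roots of unity. The strategy is to apply this criterion to two different factorizations of $f$ and then reconcile the two resulting characterizations.

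First, I would write $f(X)=X^n\tilde h(X^{q-1})$ with $\tilde h(Y)=Y^d+a$, which is valid since $\tilde h(X^{q-1})=X^{d(q-1)}+a$. Applying the criterion with $Q=q^2$, $r=n$, $s=q-1$, so that $(Q-1)/s=q+1$, yields: $f$ permutes $\f_{q^2}$ if and only if $\gcd(n,q-1)=1$ and $X^n(X^d+a)^{q-1}$ permutes $\mu_{q+1}$. This already delivers condition (ii) together with a weaker form of (i).

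To upgrade $\gcd(n,q-1)=1$ to $\gcd(n,d(q-1))=1$, I would then apply the criterion to the alternative factorization $f(X)=X^n\tilde h'(X^{d(q-1)})$ where $\tilde h'(Y)=Y+a$. The hypothesis $d\mid q+1$ guarantees $d(q-1)\mid q^2-1$, so the criterion applies with $s=d(q-1)$ and in particular forces the necessary condition $\gcd(n,d(q-1))=1$, which is (i).

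Combining the two applications, the theorem follows: for necessity, the second decomposition supplies (i) while the first supplies (ii); for sufficiency, note that (i) implies $\gcd(n,q-1)=1$ since $q-1\mid d(q-1)$, so (i) and (ii) together are exactly the characterization produced by the first decomposition. I anticipate no substantial obstacle; the only mild subtlety is the need to invoke the folklore criterion with two different decompositions of $f$ in order to obtain the sharper form of condition (i).
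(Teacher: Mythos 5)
Your proposal is correct and is essentially the paper's approach: the paper offers no proof at all, simply attributing Theorem~\ref{folklore} to the folklore criterion of Park--Lee, Wang, and Zieve, and your argument is a valid instantiation of that criterion. The one point where you add genuine content beyond the bare citation --- applying the criterion a second time with $s=d(q-1)$ (legitimate because $d\mid q+1$ gives $d(q-1)\mid q^2-1$) to upgrade $\gcd(n,q-1)=1$ to $\gcd(n,d(q-1))=1$ --- is exactly the right way to justify the sharper form of condition (i).
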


Assume that $f(X)$ in \eqref{fq2} is a PB of $\f_{q^2}$. Then for $x\in\mu_{q+1}$,
\[
x^n(x^d+a)^{q-1}=\frac{x^n(x^{dq}+a^q)}{x^d+a}=\frac{x^n(a^qx^d+1)}{x^d(x^d+a)}=G(x),
\]
where
\begin{equation}\label{G}
G(X)=\frac{a^qX^n+X^{n-d}}{X^d+a}.
\end{equation}
Write
\[
G(X)=\frac{P(X)}{Q(X)},
\]
where
\[
\begin{cases}
P(X)=a^qX^n+X^{n-d},\cr
Q(X)=X^d+a,
\end{cases}
\qquad\text{if}\ n\ge d,
\]
\[
\begin{cases}
P(X)=a^qX^d+1,\cr
Q(X)=X^{2d-n}+aX^{d-n},
\end{cases}
\qquad\text{if}\ n<d.
\]
We assume that $a^{q+1}\ne 1$, which implies that $\text{gcd}(P,Q)=1$. Thus
\[
\deg G=\begin{cases}
n&\text{if}\ n\ge d,\cr
2d-n&\text{if}\ n<d.
\end{cases}
\]
Let
\begin{equation}\label{N(G)}
N(G)=\frac{P(X)Q(Y)-P(Y)Q(X)}{X-Y}\in\f_{q^2}[X,Y],
\end{equation}
which is the numerator of $(G(X)-G(Y))/(X-Y)$.
We have
\[
\deg N(G)\le\begin{cases}
n+d-1&\text{if}\ n\ge d,\cr
3d-n-1&\text{if}\ n<d.
\end{cases}
\]

\begin{thm}\label{HW}
Assume that $f(X)$ in \eqref{fq2} is a PB of $\f_{q^2}$, where $q\ge (2\max\{n,2d-n\})^4$. Then $N(G)$ in \eqref{N(G)} is reducible in $\overline\f_q[X,Y]$, where $\overline\f_q$ is the algebraic closure of $\f_q$.
\end{thm}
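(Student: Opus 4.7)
The plan is to reduce the absolute reducibility of $N(G)\in\f_{q^2}[X,Y]$ to an analogous statement about a companion polynomial $N(\tilde G)\in\f_q[s,t]$, obtained by conjugating $G$ with a parametrization of $\mu_{q+1}$ by $\Bbb P^1(\f_q)$, and then to derive that statement from the Hasse--Weil bound over $\f_q$ combined with the permutation property of $\tilde G$ on $\f_q$.

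First, fix $\omega\in\f_{q^2}\setminus\f_q$ and use the M\"obius map $\psi(s)=(s-\omega^q)/(s-\omega)$, which restricts to a bijection $\psi\colon\Bbb P^1(\f_q)\to\mu_{q+1}$. Since Theorem~\ref{folklore} gives $G|_{\mu_{q+1}}$ a permutation, the conjugate $\tilde G:=\psi^{-1}\circ G\circ\psi$ is a rational function of degree $M:=\max\{n,2d-n\}$ permuting $\Bbb P^1(\f_q)$. Because $\tilde G$ and its Galois conjugate $\tilde G^{(q)}$ agree at all but the $\le M$ poles of $\tilde G$ in $\f_q$ and both have degree $\le M<q+1$, they coincide as rational functions, so $\tilde G\in\f_q(s)$. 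A direct computation using $G(\psi(s))=\hat P(s)/\hat Q(s)$, where $\hat P(s)=(s-\omega)^M P(\psi(s))$ and $\hat Q(s)=(s-\omega)^M Q(\psi(s))$ are polynomials in $\f_{q^2}[s]$ of degree $M$, then yields the identity
\[
N(\tilde G)(s,t)=c\,(s-\omega)^{M-1}(t-\omega)^{M-1}\,N(G)(\psi(s),\psi(t))
\]
for a nonzero constant $c=(\omega^q-\omega)^2\in\f_q^*$. Combined with $\deg_X N(G)=\deg_Y N(G)=M-1$ (verified directly from the form of $P,Q$), this identity shows that absolute irreducibility of $N(G)$ in $\overline{\f_q}[X,Y]$ is equivalent to that of $N(\tilde G)$ in $\overline{\f_q}[s,t]$ via the birational equivalence $\psi\times\psi$.

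Arguing by contradiction, suppose $N(\tilde G)\in\f_q[s,t]$ is absolutely irreducible. Since $\deg N(\tilde G)\le 2M-1$, the geometric genus of the normalization $C'$ of the projective closure of $\{N(\tilde G)=0\}$ satisfies $g\le(2M-2)(2M-3)/2$, and the Hasse--Weil bound, with an $O(M^2)$ correction for points at infinity and desingularization, yields
\[
\#C'(\f_q)\ge q+1-2g\sqrt{q}-O(M^2).
\]
On the other hand, because $\tilde G$ permutes $\Bbb P^1(\f_q)$, any $\f_q$-point $(s_0,t_0)$ of $C'$ must lie on the diagonal (so $s_0$ is a zero of the Wronskian $\hat P\hat Q'-\hat P'\hat Q$, giving at most $2M-1$ possibilities) or else have both $s_0$ and $t_0$ among the $\le M$ poles of $\tilde G$ (at most $M^2$ pairs). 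Hence $\#C'(\f_q)\le M^2+2M-1$.

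The hypothesis $q\ge(2M)^4$ gives $\sqrt q\ge 4M^2$, so $(2M-2)(2M-3)\sqrt q<q$ with enough slack to dominate the $O(M^2)$ upper bound, yielding the desired contradiction and concluding that $N(\tilde G)$, and therefore $N(G)$, is reducible in $\overline{\f_q}[X,Y]$. The main obstacle I expect is the careful bookkeeping of the lower-order constants in the Hasse--Weil estimate against the permutation-based upper bound, ensuring strictness at the borderline regime $q\approx(2M)^4$.
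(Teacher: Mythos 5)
Your proposal follows essentially the same route as the paper: conjugate $G$ by degree-one rational maps to obtain a permutation rational function defined over $\f_q$, transfer absolute irreducibility of $N(G)$ to the numerator polynomial of the conjugate, and then play the Hasse--Weil lower bound on $\f_q$-points against the upper bound forced by the permutation property (the paper cites Lemmas 3.1 and 3.2 of \cite{Hou-FFA-2018} for the steps you verify by hand). The argument is correct; the only cosmetic differences are that the paper works with a map permuting $\f_q$ and counts affine off-diagonal zeros with the bound $q-(\delta-1)(\delta-2)q^{1/2}-2\delta>0$, whereas you work on $\Bbb P^1(\f_q)$ and track the constants through the genus of the normalization.
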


\begin{proof}
We only give a sketch of the proof; the omitted details are given in \cite[\S 3]{Hou-FFA-2018}.

There exist $l_1,l_2\in\f_{q^2}(X)$ of degree one such that $H:=l_1\circ G\circ l_2$ permutes $\f_q$. Since $\deg H=\deg G<q$, by \cite[Lemma~3.2]{Hou-FFA-2018}, $H\in\f_q(X)$. Let $A(X,Y)=N(H)\in\f_q[X,Y]$, the numerator of $(H(X)-H(Y))/(X-Y)$. Assume to the contrary that $N(G)$ is irreducible over $\overline\f_q$. Then by \cite[Lemma~3.1]{Hou-FFA-2018}, $A(X,Y)$ is also irreducible over $\overline\f_q$. We have
\[
\delta:=\deg A(X,Y)\le 2\deg H-2=2\deg G-2.
\]
By the Hasse-Weil bound, the number of zeros of $A(X,Y)$ in the projective plane $\Bbb P^2(\f_q)$ is at least
\[
q-(\delta-1)(\delta-2)q^{1/2}.
\]
Excluding the zeros at infinity of $\Bbb P^2(\f_q)$ and on the diagonal $\{(x,x):x\in\f_q\}$ of the affine plane $\f_q^2$, we have
\[
|\{(x,y)\in\f_q^2:x\ne y,\ A(x,y)=0\}|\ge q-(\delta-1)(\delta-2)q^{1/2}-2\delta.
\]
The right side is positive since $q\ge \delta^4$. Hence there exist $(x,y)\in\f_q^2$ with $x\ne y$ such that $A(x,y)=0$. Then $H(x)=H(y)$, which is a contradiction.
\end{proof}

\section{Canonical Forms of Permutation Binomials}

For our purpose, a binomial over $\f_q$ is a polynomial of the 
\[
f(X)=aX^m+bX^n\in\f_q[X],
\]
where $a,b\in\f_q^*$, $m,n>0$, $m\not\equiv 0$, $n\not\equiv 0$ and $m\not\equiv n\pmod{q-1}$. We treat $f(X)$ as a function from $\f_q$ to $\f_q$, that is, we identify $f(X)$ with its image in the quotient ring $\f_q[X]/\langle X^q-X\rangle$. Let $\mathcal B_q$ denote the set of all such binomials. Two members $f,g\in\mathcal B_q$ are considered {\em equivalent}, denoted as $f\sim g$, if one can be obtained from the other through a combination of the following transformations of $\mathcal B_q$: 
\begin{align}\label{alpha}
&\alpha_u:\mathcal B_q\to\mathcal B_q,\ f(X)\mapsto uf(X),\quad u\in\f_q^*,\\
\label{beta}
&\beta:\mathcal B_q\to\mathcal B_q,\ f(X)\mapsto f(X)^p,\quad p=\text{char}\,\f_q,\\
\label{gamma}
&\gamma_{v,s}:\mathcal B_q\to\mathcal B_q,\ f(X)\mapsto f(vX^s),\quad v\in\f_q^*,\ s\in\Bbb Z^+,\ \text{gcd}(s,q-1)=1.
\end{align}
If $f,g\in\mathcal B_q$ are equivalent, then $f$ permutes $\f_q$ if and only if $g$ does. It is clear that $\gamma_{v,s}$ commutes with $\alpha_u$ and $\beta$, and $\beta\circ\alpha_u=\alpha_{u^p}\circ\beta$. Therefore, for $f,g\in\mathcal B_q$, $f\sim g$ if and only if
\begin{equation}\label{g=uf}
g(X)=uf(vX^s)^{p^i}
\end{equation}
for some $u,v\in\f_q^*$, $i\ge 0$ and $s>0$ with $\text{gcd}(s,q-1)=1$.

For $d\mid q-1$, define
\begin{equation}\label{def-Nd}
N_d=\{1\le n\le q-1:n=n^*\},
\end{equation}
where
\begin{align*}
n^*=\min\bigl\{1\le n'\le q-1:\ &\text{$n'\equiv tn\!\!\pmod{q-1}$ for some $t\in\Bbb Z_{q-1}^\times$}\cr
&\text{with $t\equiv 1\!\!\pmod{(q-1)/d}$ or}\cr
&\text{$n'\equiv tn-d\!\!\pmod{q-1}$ for some $t\in\Bbb Z_{q-1}^\times$}\cr
&\text{with $t\equiv -1\!\!\pmod{(q-1)/d}$}\bigr\}. 
\end{align*}
(Here $\Bbb Z_{q-1}^\times$ denotes the multiplicative group of $\Bbb Z_{q-1}$.) Let $\theta:\Bbb Z_{q-1}^\times\to\Bbb Z_{(q-1)/d}^\times$ be the natural homomorphism (which is onto). Then $G:=\theta^{-1}(\{\pm1\})$ acts on $\Bbb Z_{q-1}$ as follows: For $t\in G$ and $n\in\Bbb Z_{q-1}$,
\[
t(n)=\begin{cases}
tn&\text{if}\ \theta(t)=1,\cr
tn-d&\text{if}\ \theta(t)=-1.
\end{cases}
\]
Write $\Bbb Z_{q-1}=\{1,2,\dots,q-1\}$. Then for $n\in\Bbb Z_{q-1}$, $n^*$ is the least element in the $G$-orbit of $n$. Therefore $N_d$ is the set of least elements of the $G$-orbits in $\Bbb Z_{q-1}$.

\begin{exmp}\label{Exmp-Nd}\rm
Let $q=2^4$ and $d=3$. We have $\theta:\Bbb Z_{15}^\times\to\Bbb Z_5^\times$, $\theta^{-1}(1)=\{1,11\}$, $\theta^{-1}(-1)=\{-1,4\}$ and $G=\{1,11,-1,4\}$. The $G$-orbits of $\Bbb Z_{15}$ are $\{1,11\}$, $\{2,7,10,5\}$, $\{3,9\}$, $\{4,14,8,13\}$, $\{6\}$, $\{15\}$. Hence $N_d=\{1,2,3,4,6,15\}$.
\end{exmp}

For $d\mid q-1$ and $n\in N_d$, let
\begin{align}\label{Gd,n}
G_{d,n}=\,&\text{the subgroup of $\Bbb Z_d^\times$ generated by}\\
&\begin{cases}
\{p,-1\}&\text{if $d\equiv -2n\pmod{(q-1)/d}$ and $\text{gcd}(n,q-1)=1$},\cr
\{p\}&\text{otherwise},
\end{cases}\nonumber
\end{align} 
where $p=\text{char}\,\f_q$. Let $G_{d,n}$ act on $\f_q^*/(\f_q^*)^d$, where $(\f_q^*)^d=\{x^d:x\in\f_q^*\}$, as follows:
\[
\begin{array}{ccl}
G_{d,n}\times\; \f_q^*/(\f_q^*)^d & \longrightarrow & \f_q^*/(\f_q^*)^d \vspace{0.5em}\\
(s, a(\f_q^*)^d) & \longmapsto & a^s(\f_q^*)^d,\quad a\in\f_q^*.
\end{array}
\]
Let $A_{d,n}\subset\f_q^*$ be such that $\{a(\f_q^*)^d:a\in A_{d,n}\}$ is a system of representatives of the $G_{d,n}$-orbits in $\f_q^*/(\f_q^*)^d$. Equivalently, let $G_{d,n}$ act on $\Bbb Z_d$ through multiplication and let $\xi$ be a primitive element of $\f_q$. Then $A_{d,n}=\{\xi^e:e\in E_{d,n}\}$, where $E_{d,n}$ is a system of representatives of the $G_{d,n}$-orbits in $\Bbb Z_d$.

We now are ready to state and prove the main result of this section.

\begin{thm}\label{T-can-form}
Assume that $f\in\mathcal B_q$ permutes $\f_q$. Then there is a unique triple $(d,n,a)$, where $d\mid q-1$, $n\in N_d$ and $a\in A_{d,n}$, such that 
\begin{equation}\label{can-form}
f(X)\sim X^n(X^d+a).
\end{equation}
We call the right side of \eqref{can-form} the {\em canonical form} of $f$.
\end{thm}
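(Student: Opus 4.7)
The plan is to prove existence by successively normalizing $f$ and then establish uniqueness by showing each of $d$, $n$, and $a$ is an invariant of the equivalence class. Starting from $f(X) = aX^m + bX^n \in \mathcal B_q$ with $m > n$, I apply $\alpha_{1/a}$ to obtain $f \sim X^n(X^{d_0} + b/a)$, where $d_0 = m - n$. The key lemma needed is that for any $d_0$, there exists $s \in \Bbb Z_{q-1}^\times$ with $d_0 s \equiv d \pmod{q-1}$, where $d := \gcd(d_0, q-1)$; this follows from a short CRT-type lifting (write $d_0 = de$, take $s_0 \equiv e^{-1} \pmod{(q-1)/d}$, and lift $s_0$ to a unit mod $q-1$). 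Then $\gamma_{1,s}$ composed with an $\alpha$ to renormalize the leading coefficient produces $f \sim X^{n'}(X^d + a')$ with $d \mid q-1$.

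I next identify the symmetries of the shape $X^n(X^d + a)$ with $d \mid q-1$ fixed. Expanding $u\,f(vX^s)^{p^i}$ modulo $X^q - X$, the shape is preserved precisely when $sp^i \equiv \pm 1 \pmod{(q-1)/d}$, and a direct computation shows the induced action on $(n,\,a(\f_q^*)^d)$ reads
\begin{align*}
(n,\ a(\f_q^*)^d) &\longmapsto (nt,\ a^{p^i}(\f_q^*)^d) && \text{if } \theta(t) = 1,\\
(n,\ a(\f_q^*)^d) &\longmapsto (nt - d,\ a^{-p^i}(\f_q^*)^d) && \text{if } \theta(t) = -1,
\end{align*}
where $t := sp^i \bmod (q-1)$. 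The action on $n$ is exactly the $G$-action used to define $N_d$, so the orbit of $n'$ meets $N_d$ in a unique element $n$, and $f$ becomes $X^n(X^d + a'')$.

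With $(d, n)$ now fixed, I examine the stabilizer of $n$ in the symmetry group above and its image in the action group $\Bbb Z_d^\times$ on $\f_q^*/(\f_q^*)^d$. The $\theta(t)=1$ part always contributes $\langle p \rangle$ (taking $t = p^i$). A $\theta(t) = -1$ symmetry fixes $n$ exactly when $nt \equiv n + d \pmod{q-1}$ has a solution $t$ with $\theta(t) = -1$ and $t \in \Bbb Z_{q-1}^\times$; a reduction modulo $(q-1)/d$ shows this occurs precisely under $d \equiv -2n \pmod{(q-1)/d}$ and $\gcd(n, q-1) = 1$, and in that case the action contributes $a \mapsto a^{-1}$. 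The resulting action is the $G_{d, n}$-action as defined, so $a''$ is moved to the unique representative $a \in A_{d,n}$. Uniqueness of $(d, n, a)$ then follows: $d = \gcd(m - n, q-1)$ is invariant under $\alpha, \beta, \gamma$ (each of which multiplies exponents by units of $\Bbb Z_{q-1}$); the $G$-orbit in $\Bbb Z_{q-1}$ of $n$ is determined by $f$; and the $G_{d,n}$-orbit of $a(\f_q^*)^d$ is determined by $(f, d, n)$.

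The main obstacle is the final stabilizer calculation, in particular the claim that the existence of a $\theta(t) = -1$ symmetry fixing $n$ is equivalent to the conjunction ``$d \equiv -2n \pmod{(q-1)/d}$ and $\gcd(n, q-1) = 1$''. Necessity of the first congruence follows by reducing $nt \equiv n + d$ modulo $(q-1)/d$; the coprimality condition $\gcd(n, q-1) = 1$ is needed to guarantee that the candidate lift $t := 1 + dn^{-1} \bmod (q-1)$ lies in $\Bbb Z_{q-1}^\times$, which I would verify by a prime-by-prime argument using $\theta(t) = -1$ to rule out any common factor of $t$ and $q-1$.
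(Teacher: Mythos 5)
Your proposal follows essentially the same route as the paper: normalize $d$ to $\gcd(m-n,q-1)$ by a unit substitution, move $n$ to the least element of its $G$-orbit, act on $a$ modulo $(\f_q^*)^d$ by the stabilizer, and prove uniqueness via the invariance of $d$, of the $G$-orbit of $n$, and of the $G_{d,n}$-orbit of $a(\f_q^*)^d$. The existence half and the invariance of $d$ and $n$ are in order.

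The genuine gap sits exactly where you located the main obstacle, but you argued the wrong direction for what uniqueness requires. For uniqueness you must show: \emph{if} some $t\in\Bbb Z_{q-1}^\times$ with $\theta(t)=-1$ satisfies $tn\equiv n+d\pmod{q-1}$ (so that an equivalence can carry $a$ to $a^{-p^i}$ modulo $(\f_q^*)^d$), \emph{then} $-1\in G_{d,n}$, i.e.\ both $d\equiv-2n\pmod{(q-1)/d}$ and $\gcd(n,q-1)=1$ hold; otherwise $a^{-p^i}(\f_q^*)^d$ could lie outside the $\langle p\rangle$-orbit of $a(\f_q^*)^d$ and two distinct elements of $A_{d,n}$ would be equivalent. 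Your sketch establishes only the converse (that the two conditions let you build the lift $t=1+dn^{-1}$), which is what \emph{existence} needs. The necessity of $\gcd(n,q-1)=1$ is precisely where the hypothesis that $f$ permutes $\f_q$ must enter, and that hypothesis is never invoked in your writeup. The missing step, as in the paper: since $f=X^{n+d}+aX^n$ permutes $\f_q$ and $d\mid q-1$, we have $\gcd(n,d)=1$; from $(t-1)n\equiv d\pmod{q-1}$ one gets $d\mid(t-1)n$, hence $d\mid t-1$, hence $\frac{t-1}{d}\,n\equiv 1\pmod{(q-1)/d}$, so $\gcd(n,(q-1)/d)=1$ and therefore $\gcd(n,q-1)=1$. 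With this supplied your argument closes. (A minor side issue: the $\theta(t)=1$ stabilizer does not contain ``$t=p^i$'' as stated, since $p^i$ need not fix $n$ or satisfy $\theta(p^i)=1$; the $\langle p\rangle$-action on $a$ is realized by $t=1$, Frobenius power $i$, and $s\equiv p^{-i}\pmod{q-1}$.)
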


\begin{proof}
{\em Existence of $(d,n,a)$}

Write $f(X)=a_0X^{m_0}+b_0X^{n_0}$, where $a_0,b_0\in\f_q^*$ and $m_0>n_0$. Let $d=\text{gcd}(m_0-n_0,q-1)$. Let $r\in\Bbb Z^+$ be such that 
\[
r\frac{m_0-n_0}d\equiv 1\pmod{\frac{q-1}d}.
\]
Since $\text{gcd}(r,(q-1)/d)=1$, there exists integer $k\ge 0$ such that $s:=r+k(q-1)/d$ is relatively prime to $q-1$. (To see this, use Dirichlet's theorem on primes in arithmetic progression or the following simple argument: Let $p_1,\dots,p_l$ be the prime divisors of $q-1$ that do not divide $r$ and let $k=p_1\cdots p_l$.) Then
\[
f(X)\sim f(X^s)=X^{sn_0}(a_0X^{s(m_0-n_0)}+b_0)=X^{n_1}(a_0X^d+b_0),
\]
where $n_1=sn_0$. We now assume $f(X)=X^{n_1}(a_0X^d+b_0)$.

Let $n=n_1^*\in N_d$. We claim that
\begin{equation}\label{claim}
f(X)\sim X^n(a_1X^d+b_1)
\end{equation}
for some $a_1,b_1\in\f_q^*$. To prove this claim, we consider two cases.

{\bf Case 1.}
Assume that $n\equiv tn_1\pmod{q-1}$ for some $t\in\Bbb Z_{q-1}^\times$ with $t\equiv 1\pmod{(q-1)/d}$. Then
\[
f(X)\sim f(X^t)=X^{tn_1}(a_0X^{td}+b_0)=X^n(a_0X^d+b_0).
\]

{\bf Case 2.} 
Assume that $n\equiv tn_1-d\pmod{q-1}$ for some $t\in\Bbb Z_{q-1}^\times$ with $t\equiv -1\pmod{(q-1)/d}$. Then 
\begin{align*}
f(X)\sim f(X^t)\,&=X^{tn_1}(a_0X^{td}+b_0)=a_0X^{tn_1+td}+b_0X^{tn_1}\cr
&=a_0X^n+b_0X^{n+d}=X^n(b_0X^d+a_0).
\end{align*}
Hence \eqref{claim} is proved.

By \eqref{claim}, we may assume
\[
f(X)=X^n(X^d+c),
\]
where $c\in\f_q^*$. To prove that $f(X)\sim X^n(X^d+a)$ for some $a\in A_{d,n}$, again, we consider two cases.

{\bf Case 1.} Assume that $d\not\equiv -2n\pmod{(q-1)/d}$ or $\text{gcd}(n,q-1)\ne 1$. By \eqref{Gd,n}, $G_{d,n}=\langle p\rangle<\Bbb Z_d^\times$. Then by the definition of $A_{d,n}$, there exist $i\in\Bbb N$, $a\in A_{d,n}$ and $b\in\f_q^*$ such that $c^{p^i}=ab^d$. Write $b=b_1^{p^i}$, where $b_1\in\f_q^*$. Let $s\in\Bbb Z^+$ be such that $sp^i\equiv 1\pmod{q-1}$. Then
\begin{align*}
f(X)\,&\sim f(b_1 X^s)^{p^i}=(b_1 X^s)^{np^i}((b_1 X^s)^{dp^i}+c^{p^i})\cr
&\sim X^n(b_1^{dp^i}X^d+c^{p^i})=X^n(b^dX^d+c^{p^i})\cr
&\sim X^n(X^d+c^{p^i}b^{-d})=X^n(X^d+a).
\end{align*}

{\bf Case 2.} Assume that $d\equiv -2n\pmod{(q-1)/d}$ and $\text{gcd}(n,q-1)=1$. Then $G_{d,n}=\langle p,-1\rangle<\Bbb Z_d^\times$. So there exist $i\in\Bbb N$, $a\in A_{d,n}$ and $b\in\f_q^*$ such that either $c^{p^i}=ab^d$ or $c^{-p^i}=ab^d$. In the former case, the proof is identical to Case 1. In the latter case, write $b=b_1^{p^i}$, where $b_1\in\f_q^*$. Let $k\in\Bbb Z^+$ be such that $kn\equiv 1\pmod{q-1}$, and let $s=1+kd$. Then 
\begin{align*}
sn=n+nkd&\,\equiv n+d\pmod{q-1}\cr
&\equiv -n\pmod{(q-1)/d}.
\end{align*}
Hence $s\equiv -1\pmod{(q-1)/d}$. It follows that $\text{gcd}(s,(q-1)/d)=1$. We also have $\text{gcd}(s,d)=\text{gcd}(1+kd,d)=1$. Therefore $\text{gcd}(s,q-1)=1$. We have
\[
f(X)\sim f(X^s)=X^{sn}(X^{sd}+c)=X^{sn+sd}+cX^{sn}.
\]
In the above,
\[
sn=n+nkd\equiv n+d\pmod{q-1}
\]
and 
\[
sd=(1+kd)d\equiv(1+k(-2n))d\equiv -d\pmod{q-1}.
\]
Hence
\[
f(X)\sim X^n+cX^{n+d}\sim X^n(X^d+c^{-1}),
\]
where $(c^{-1})^{p^i}=ab^d$. It follows from Case 1 that
\[
X^n(X^d+c^{-1})\sim X^n(X^d+a).
\]

\medskip
{\em Uniqueness of $(d,n,a)$}

Assume that
\begin{equation}\label{x^n_x^n1}
f(X)=X^n(X^d+a)\sim X^{n_1}(X^{d_1}+a_1),
\end{equation}
where $d\mid q-1$, $n\in N_d$, $a\in A_{d,n}$, $d_1\mid q-1$, $n_1\in N_{d_1}$, $a_1\in A_{d_1,n_1}$.

In general, for $bX^m+cX^l\in\mathcal B_q$, $\text{gcd}(m-l,q-1)$ is invariant under equivalence. Therefore, in \eqref{x^n_x^n1}, we have $d=d_1$.

By \eqref{x^n_x^n1},
\begin{equation}\label{x^n1=ufv}
X^{n_1}(X^d+a_1)=uf(vX^s)^{p^i}
\end{equation}
for some $u,v\in\f_q^*$, $i\ge 0$ and $s>0$ with $\text{gcd}(s,q-1)=1$. Expanding \eqref{x^n1=ufv} gives
\[
X^{n_1+d}+a_1X^{n_1}=\alpha X^{t(n+d)}+\beta X^{tn},
\]
where $t=sp^i$ and $\alpha,\beta\in\f_q^*$. It follows that 
\begin{equation}\label{n1=d=t(n+d)}
\begin{cases}
n_1+d\equiv t(n+d)\pmod{q-1},\cr
n_1\equiv tn\pmod{q-1},
\end{cases}
\end{equation}
or
\begin{equation}\label{n1=d=tn}
\begin{cases}
n_1+d\equiv tn\pmod{q-1},\cr
n_1\equiv t(n+d)\pmod{q-1}.
\end{cases}
\end{equation}
Note that \eqref{n1=d=t(n+d)} is equivalent to
\begin{equation}\label{t=1}
\begin{cases}
t\equiv 1\pmod{(q-1)/d},\cr
n_1\equiv tn\pmod{q-1},
\end{cases}
\end{equation}
and \eqref{n1=d=tn} is equivalent to
\begin{equation}\label{t=-1}
\begin{cases}
t\equiv -1\pmod{(q-1)/d},\cr
n_1\equiv tn-d\pmod{q-1}.
\end{cases}
\end{equation}
Since $n\in N_d$, it follows from \eqref{t=1}, \eqref{t=-1} and the definition of $N_d$ (\eqref{def-Nd}) that $n\le n_1$. By symmetry, $n_1\le n$, whence $n=n_1$.

Now \eqref{x^n1=ufv} becomes
\begin{align*}
X^{n+d}+a_1X^n\,&=u\bigl[(vX^s)^{n+d}+a(vX^s)^n\bigr]^{p^i}\cr
&=uv^{p^i(n+d)}X^{sp^i(n+d)}+ua^{p^i}v^{p^in}X^{sp^in}.
\end{align*}
Let $t=sp^i$. Then there are two possibilities.

{\bf Case 1.} \eqref{t=1} holds with $n=n_1$ and
\begin{equation}\label{1a1case1}
(uv^{p^i(n+d)},\, ua^{p^i}v^{p^in})=(1, a_1).
\end{equation}

{\bf Case 2.} \eqref{t=-1} holds with $n=n_1$ and
\begin{equation}\label{1a1case2}
(ua^{p^i}v^{p^in},\, uv^{p^i(n+d)})=(1, a_1).
\end{equation}

\noindent
It suffices to show that in both cases, $a$ and $a_1$ are in the same $G_{d,n}$-orbit. (Then $a=a_1$.)

First, assume Case 1. We have 
\[
a_1=\frac{ua^{p^i}v^{p^in}}{uv^{p^i(n+d)}}=a^{p^i}v^{-p^id},
\]
which is in the $G_{d,n}$-orbit of $a$.

Next, assume Case 2. \eqref{t=-1} with $n=n_1$ gives
\[
\begin{cases}
t\equiv -1\pmod{(q-1)/d},\cr
n\equiv tn-d\pmod{q-1}.
\end{cases}
\]
It follows that $n\equiv tn-d\equiv -n-d\pmod{(q-1)/d}$, i.e., $d\equiv -2n\pmod{(q-1)/d}$. Since $f(X)$ permutes $\f_q$, we have $\text{gcd}(n,d)=1$. From $n\equiv tn-d\pmod{q-1}$, we have $(t-1)n-d\equiv 0\pmod{q-1}$, whence $d\mid t-1$ and 
\[
\frac{t-1}dn-1\equiv 0\pmod{\frac{q-1}d}.
\]
In particular, $\text{gcd}(n,(q-1)/d)=1$. Combining this with $\text{gcd}(n,d)=1$, we have $\text{gcd}(n,q-1)=1$. Therefore $G_{d,n}=\langle p,-1\rangle$. Now by \eqref{1a1case2},
\[
a_1=\frac{uv^{p^i(n+d)}}{ua^{p^i}v^{p^in}}=a^{-p^i}v^{p^id},
\]
which is in the $G_{d,n}$-orbit of $a$.
\end{proof}

\begin{exmp}\label{Exmp-Lappano}\rm
Assume that $n,d\in\Bbb Z^+$ are such that $d\mid q+1$, $n<2d$, $\text{gcd}(n,q^2-1)=1$ and $\text{gcd}(2d-n,q-1)=1$, and let $a\in\f_{q^2}^*$. Since $\text{gcd}(dq-n+d,q-1)=\text{gcd}(2d-n,q-1)=1$ and $\text{gcd}(dq-n+d,q+1)=\text{gcd}(n,q+1)=1$, we have $\text{gcd}(dq-n+d,q^2-1)=1$. Then in $\mathcal B_q$,
\begin{align*}
X^n(X^{d(q-1)}+a)\,&=X^{dq+n-d}+aX^n\cr
&\sim X^{(dq-n+d)(dq+n-d)}+aX^{(dq-n+d)n}\kern2em (X\mapsto X^{dq-n+d})\cr
&=X^{d^2q^2-(n-d)^2}+aX^{(dq-n+d)n}\cr
&=X^{d^2-(n-d)^2}+aX^{(dq-n+d)n}\cr
&=X^{n(2d-n)}+aX^{(dq-n+d)n}\cr
&\sim X^{2d-n}+aX^{dq-n+d} \kern 8.3em (X^n\mapsto X)\cr
&=X^{2d-n}(1+aX^{d(q-1)})\cr
&\sim X^{2d-n}(X^{d(q-1)}+a^{-1}).
\end{align*}
In particular, when $n=1$, $d=2$, $q$ is odd and $q\not\equiv 1\pmod 3$, we have 
\[
X(X^{2(q-1)}+a)\sim X^3(X^{2(q-1)}+a^{-1}).
\]
This shows that the PB in Result~\ref{R1.4} is equivalent to a PB in Result~\ref{R1.2}. 
\end{exmp}

\begin{exmp}\label{Tu-Zeng}\rm
We show that the PB in Result~\ref{R1.5} is equivalent to a PB in Result~\ref{R1.6}. Let $e=2$, $q=2^{2m}$, $n\in\Bbb Z^+$, $d=3$, $a\in\f_{q^2}^*$, and consider $f=f_{q,2,n,3,a}=X^n(X^{3(q-1)}+a)$. 

Let $s=(q+2)/3+k(q+1)$, where 
\[
k=\begin{cases}
0&\text{if}\ m\equiv 0,1\pmod 3,\cr
1&\text{if}\ m\equiv -1\pmod 3.
\end{cases}
\]
We claim that $\text{gcd}(s,q^2-1)=1$. Clearly, $\text{gcd}(s,q+1)=1$. We have 
\begin{align*}
\text{gcd}(s,q-1)\,&=\text{gcd}\Bigl(\frac{q+2}3+2k,\,q-1\Bigr)\cr
&=\frac 13\text{gcd}(q+2+6k,\,3q-3)\cr
&=\frac 13\text{gcd}(q+2+6k,\,3(-2-6k)-3)\cr
&=\frac 13\text{gcd}(q+2+6k,\,9(2k+1)).
\end{align*}
In the above, $9(2k+1)=3^2$ or $3^3$, and 
\begin{align*}
q+2+6k\,&=(3-1)^{2m}+2+6k\cr
&\equiv 1-2m\cdot 3+2+6k\pmod{3^2}\cr
&=3+6(k-m)\cr
&\not\equiv 0\pmod{3^2}.
\end{align*}
So $\text{gcd}(s,q-1)=1$ and the claim is proved.

Now we have
\[
f(X)\sim f(X^s)=X^{sn}(X^{s\cdot 3(q-1)}+a)=X^{sn}(X^{q-1}+a).
\]
By Result~\ref{R1.6}, $X^{sn}(X^{q-1}+a)$ permutes $\f_{q^2}$ if and only if
\[
\text{gcd}(sn,q-1)=1,\quad sn\equiv 1\pmod{q+1},\quad\text{and}\ a^{q+1}\ne 1,
\]
i.e.,
\[
\text{gcd}(n,q-1)=1,\quad n\equiv 3\pmod{q+1},\quad\text{and}\ a^{q+1}\ne 1,
\]
which are precisely the conditions in Result~\ref{R1.5}.
\end{exmp}

\section{Proof of Theorem~\ref{T1.9}}

\noindent{\bf Theorem~\ref{T1.9}.} {\it
Let $q=2^m$, $n\ge 1$ and $a\in\f_{q^2}^*$ be such that $q\ge (2\max\{n,6-n\})^4$ and $a^{q+1}\ne 1$. Then $f(X)=f_{q,2,n,3,a}(X)=X^n(X^{3(q-1)}+a)$ is not a PB of $\f_{q^2}$.}

\medskip

Assume to the contrary that $f$ is a PB of $\f_{q^2}$. If $m$ is even, by Result~\ref{R1.5}, $n\ge q+4$, which is a contradiction. So $m$ is odd, and $3\mid q+1$.
By \eqref{G}, 
\begin{equation}\label{G(X)}
G(X)=\frac{a^qX^n+X^{n-3}}{X^3+a}.
\end{equation}
Let 
\begin{equation}\label{N(X,Y)}
N(X,Y)=\text{the numerator of}\ \frac{G(X)+G(Y)}{X+Y}.
\end{equation}
By Theorem~\ref{HW}, $N(X,Y)$ is reducible over $\overline\f_q$. However, we will show that $N(X,Y)$ is irreducible over $\overline\f_q$, hence creating a contradiction. We consider two cases, $n\ge 3$ and $n\le 2$, separately. 

\subsection{Case 1. $n\ge 3$}\

Since $\text{gcd}(n,3(q-1))=1$ (Theorem~\ref{folklore}), we have $n>3$. We have
\begin{align*}
N(X,Y)=\,&\frac 1{X+Y}\Bigl[(a^qX^n+X^{n-3})(Y^3+a)+(a^qY^n+Y^{n-3})(X^3+a)\Bigr]\cr
=\,&a\frac{X^{n-3}+Y^{n-3}}{X+Y}+\Bigl[a^{q+1}\frac{X^n+Y^n}{X+Y}+X^3Y^3\frac{X^{n-6}+Y^{n-6}}{X+Y}\Bigr]\cr
&+a^qX^3Y^3\frac{X^{n-3}+Y^{n-3}}{X+Y}.
\end{align*}
The homogenization of $N(X,Y)$ is 
\begin{align*}
N^*(X,Y,Z)=\,&a\frac{X^{n-3}+Y^{n-3}}{X+Y}Z^6+\Bigl[a^{q+1}\frac{X^n+Y^n}{X+Y}+X^3Y^3\frac{X^{n-6}+Y^{n-6}}{X+Y}\Bigr]Z^3\cr
&+a^qX^3Y^3\frac{X^{n-3}+Y^{n-3}}{X+Y}\cr
=\,&Q(Z^3),
\end{align*}
where
\begin{align*}
Q(Z)=\,& a\frac{X^{n-3}+Y^{n-3}}{X+Y}Z^2+\Bigl[a^{q+1}\frac{X^n+Y^n}{X+Y}+X^3Y^3\frac{X^{n-6}+Y^{n-6}}{X+Y}\Bigr]Z\cr
&+a^qX^3Y^3\frac{X^{n-3}+Y^{n-3}}{X+Y}.
\end{align*}
It suffices to show that $N^*(X,Y,Z)$ is irreducible over $\overline\f_q$. We first show that $N^*(X,Y,Z)$, as a polynomial in $Z$ over $\overline\f_q[X,Y]$, is primitive, i.e., the gcd of its coefficients is 1; that is,
\begin{equation}\label{gcd}
\text{gcd}\Bigl(\frac{X^{n-3}+Y^{n-3}}{X+Y},\, a^{q+1}\frac{X^n+Y^n}{X+Y}+X^3Y^3\frac{X^{n-6}+Y^{n-6}}{X+Y}\Bigr)=1.
\end{equation}  
Since the polynomials in \eqref{gcd} are homogeneous, it suffices to prove \eqref{gcd} with $Y=1$, i.e.,  
\begin{equation}\label{gcdY=1}
\text{gcd}\Bigl(\frac{X^{n-3}+1}{X+1},\, a^{q+1}\frac{X^n+1}{X+1}+X^3\frac{X^{n-6}+1}{X+1}\Bigr)=1.
\end{equation}  
Let $\zeta\in\overline\f_q$ be a root of $(X^{n-3}+1)/(X+1)$. If $\zeta\ne 1$, then $\zeta^{n-3}+1=0$. Thus
\begin{align*}
&\Bigl(a^{q+1}\frac{X^n+1}{X+1}+X^3\frac{X^{n-6}+1}{X+1}\Bigr)\Big|_{X=1}\cr
=\,&
\frac1{\zeta+1}\bigl(a^{q+1}(\zeta^n+1)+\zeta^3(\zeta^{n-6}+1)\bigr)\cr
=\,&\frac1{\zeta+1}\bigl(a^{q+1}(\zeta^3+1)+1+\zeta^3\bigr)\cr
=\,&\frac1{\zeta+1}(a^{q+1}+1)(\zeta^3+1)\ne 0.
\end{align*}
(Note: $\zeta^3\ne 1$ since $\zeta^{n-3}=1$ and $\text{gcd}(n,3(q-1))=1$.)
If $\zeta=1$, then $n$ must be odd, in which case, 
\[
\Bigl(a^{q+1}\frac{X^n+1}{X+1}+X^3\frac{X^{n-6}+1}{X+1}\Bigr)\Big|_{X=1}=a^{q+1}n+n-6=n(a^{q+1}+1)\ne 0.
\]
This proves \eqref{gcdY=1} and hence \eqref{gcd}.

With \eqref{gcd}, to prove that $N^*(X,Y,Z)$ is irreducible in $\overline\f_q[X,Y,Z]$, it suffices to show that it is irreducible in $\overline\f_q(X,Y)[Z]$.
Let $w$ be a root of $N^*(X,Y,Z)\in\overline\f_q(X,Y)[Z]$ and let $z=w^3$. Then $z$ is a root of $Q(Z)$. It suffices to show that $[\overline\f_q(X,Y,z):\overline\f_q(X,Y)]=2$ and $[\overline\f_q(X,Y,w):\overline\f_q(X,Y,z)]=3$.

\[
\beginpicture
\setcoordinatesystem units <4mm,4mm> point at 0 0

\setlinear
\plot 0 1  0 3 /
\plot 0 5  0 7 /

\put {$\overline{\f}_q(X,Y)$} at 0 0
\put {$\overline{\f}_q(X,Y,z)$} at 0 4
\put {$\overline{\f}_q(X,Y,w)$}  at 0 8
\put {$\scriptstyle 2$} [l] at 0.2 2
\put {$\scriptstyle 3$} [l] at 0.2 6
\endpicture
\]


\subsubsection{Proof that $[\overline\f_q(X,Y,z):\overline\f_q(X,Y)]=2$}\

Assume to the contrary that $Q(Z)$ is reducible over $\overline\f_q(X,Y)$. Then there exists $A/B\in\overline\f_q(X,Y)$ ($A,B\in\overline\f_q[X,Y]$, $\text{gcd}(A,B)=1$) such that
\begin{equation}\label{A(A+B)}
\frac{\displaystyle a^{q+1}X^3Y^3\Bigl(\frac{X^{n-3}+Y^{n-3}}{X+Y}\Bigr)^2}{\displaystyle \Bigl(a^{q+1}\frac{X^n+Y^n}{X+Y}+X^3Y^3\frac{X^{n-6}+Y^{n-6}}{X+Y}\Bigr)^2}=
\Bigl(\frac AB\Bigr)^2+\frac AB=\frac{A(A+B)}{B^2}.
\end{equation}
In the above equation, the numerator and the denominator on the left side are relatively prime (by \eqref{gcd}), so
\begin{equation}\label{B}
B=a^{q+1}\frac{X^n+Y^n}{X+Y}+X^3Y^3\frac{X^{n-6}+Y^{n-6}}{X+Y}
\end{equation}
and
\[
A(A+B)=a^{q+1}X^3Y^3\Bigl(\frac{X^{n-3}+Y^{n-3}}{X+Y}\Bigr)^2.
\]
Since $\text{gcd}(A,A+B)=1$, we may assume that 
\begin{equation}\label{A=X3U2}
\begin{cases}
A=X^3U^2,\cr
A+B=Y^3V^2,
\end{cases}
\end{equation}
for some $U,V\in\overline\f_q[X,Y]$ with $UV=(X^{n-3}+Y^{n-3})/(X+Y)$. Therefore,
\begin{equation}\label{B=}
B=X^3U^2+Y^3V^2.
\end{equation}
By \eqref{B=}, the coefficient of $XY^{n-2}$ in $B$ is $0$. However, by \eqref{B}, the coefficient of $XY^{n-2}$ in $B$ is either $a^{q+1}$ or $a^{q+1}+1$. We have a contradiction.

\subsubsection{Proof that $[\overline\f_q(X,Y,w):\overline\f_q(X,Y,z)]=3$}\

Assume the contrary. Then $z$ is a third power in $\overline\f_q(X,Y,z)$, that is, there exists $A,B\in\overline\f_q(X,Y)$ such that
\[
z=(A+Bz)^3,
\]
i.e.,
\begin{equation}\label{A+BZ}
(A+BZ)^3-Z\equiv 0\pmod{Q(Z)}.
\end{equation}
Setting $Y=1$ in \eqref{A+BZ} gives
\begin{equation}\label{A1+B1Z} 
(A_1+B_1Z)^3-Z\equiv 0\pmod{Q_1(Z)},
\end{equation}
where $A_1(X)=A(X,1)$, $B_1(X)=B(X,1)$ and   
\begin{align}\label{s3-Q1}
&Q_1(Z)=Q(Z)|_{Y=1}=\\ 
&a\frac{X^{n-3}+1}{X+1}Z^2+\Bigl[a^{q+1}\frac{X^n+1}{X+1}+X^3\frac{X^{n-6}+1}{X+1}\Bigr]Z+a^qX^3\frac{X^{n-3}+1}{X+1}.
\nonumber
\end{align}
We find that
\[
(A_1+B_1Z)^3-Z\equiv \frac{f_0(X)}{a^2(X^3+X^n)}+\frac{f_1(X)}{a^2(X^3+X^n)^2}Z\pmod{Q_1(Z)},
\]
where
\begin{align}\label{f0}
f_0(X)=\,& a^2 A_1^3 X^3 + a^{1+q} A_1 B_1^2 X^6 + a^{1+2 q} B_1^3 X^6 + a^q B_1^3 X^9 + a^2 A_1^3 X^n \\
&+ a^{1+q} A_1 B_1^2 X^{3+n} + a^q B_1^3 X^{3+n} + a^{1+2 q} B_1^3 X^{6+n},\cr
f_1(X)=\,& a^2 X^6 + a^2 A_1^2 B_1 X^6 + a^{2+q} A_1 B_1^2 X^6 + a^{2+2 q} B_1^3 X^6 + a A_1 B_1^2 X^9 \cr
&+ a^{1+q} B_1^3 X^9 +
B_1^3 X^{12} + a^2 X^{2 n} + a^2 A_1^2 B_1 X^{2 n} + a A_1 B_1^2 X^{2 n} + B_1^3 X^{2 n}\cr
& + a A_1 B_1^2 X^{3+n} + a^{2+q} A_1 B_1^2 X^{3+n} +
a A_1 B_1^2 X^{6+n} + a^{2+q} A_1 B_1^2 X^{6+n}\cr
& + a^{2+q} A_1 B_1^2 X^{3+2 n} + a^{1+q} B_1^3 X^{3+2 n} + a^{2+2 q} B_1^3 X^{6+2 n}.\nonumber
\end{align}
Therefore, $f_0(X)=f_1(X)=0$. (We will not need the fact that $f_1(X)=0$.) From \eqref{A1+B1Z}, $B_1\ne 0$. Then $f_0(X)=0$ implies $A_1\ne 0$. Let $C=B_1/A_1$. Then $f_0(X)=0$ becomes
\begin{equation}\label{a^2+1}
(a^2+a^{1+q}X^3C^2)(1+X^{n-3})=a^qX^3(a^{1+q}+X^3+X^{n-3}+a^{1+q}X^n)C^3.
\end{equation}
In the above
\begin{align*}
&\text{gcd}(1+X^{n-3},\, a^{1+q}+X^3+X^{n-3}+a^{1+q}X^n)\cr
=\,&\text{gcd}(1+X^{n-3},\, a^{1+q}+X^3+1+a^{1+q}X^3)\cr
=\,&\text{gcd}(1+X^{n-3},\, (a^{1+q}+1)(1+X^3))\cr
=\,&1+X.
\end{align*}
Let $C=D/E$, where $D,E\in\overline\f_q[X]$, $E$ is monic and $\text{gcd}(D,E)=1$. Then \eqref{a^2+1} becomes
\begin{equation}\label{a^2E}
(a^2E^3+a^{1+q}X^3D^2E)\frac{1+X^{n-3}}{1+X}=a^qX^3D^3\,\frac{a^{1+q}+X^3+X^{n-3}+a^{1+q}X^n}{1+X}.
\end{equation}
It follows that
\begin{equation}\label{D}
\frac{1+X^{n-3}}{1+X}\Bigm | D\quad\text{and}\quad D\Bigm | \frac{1+X^{n-3}}{1+X}.
\end{equation}
\eqref{a^2E} and \eqref{D} force $D\in\overline\f_q^*$ and $n=4$. So 
\[
a^2E^3+a^{1+q}D^2X^3E=a^qD^3X^3(a^{1+q}(1+X)^3+X(1+X)).
\]
Then $X\mid E$, say $E=XE_1$. Thus
\begin{equation}\label{a^2E1}
a^2E_1^3+a^{1+q}D^2XE_1=a^qD^3(1+X)(a^{1+q}X^2+X+a^{1+q}).
\end{equation}
It follows that $\deg E_1=1$, say $E_1=X+\epsilon$, $\epsilon\in\overline\f_q$. Comparing the coefficients of $X^3$ and $X^0$ in the above gives 
\begin{align}\label{a^2}
a^2\,&=a^{1+2q}D^3, \\
a^2\epsilon^3\,&=a^{1+2q}D^3.\nonumber
\end{align}
Hence $\epsilon^3=1$. Then $(a^{1+q}X^2+X+a^{1+q})|_{X=\epsilon}=a^{1+q}(1+\epsilon^2)+\epsilon\ne 0$ since $a^{1+q}\ne 1$.  It follows from \eqref{a^2E1} that $E_1\mid 1+X$, that is, $E_1=X+1$. Now \eqref{a^2E1} becomes
\[
a^2(X+1)^2+a^{1+q}D^2X=a^qD^3(a^{1+q}X^2+X+a^{1+q}).
\]
Comparing the coefficients of $X$ in the above gives $a^{1+q}D^2=a^qD^3$, i.e., $D=a$. But then \eqref{a^2} gives $a^{1+q}=1$, which is a contradiction.

\subsection{Case 2. $n\le 2$}\ 

When $n=1$, the absolute irreducibility (irreducibility over $\overline\f_q$) of $N(X,Y)$ follows from \cite[\S 3]{Hou-FFA-2018}. So we assume $n=2$. The arguments are similar to those in Case 1. We have
\begin{equation}\label{case1-G}
G(X)=\frac{a^qX^3+1}{X(X^3+a)},
\end{equation}
\begin{equation}\label{case2-N}
N(X,Y)=a^qX^3Y^3+a^{q+1}XY(X+Y)+(X+Y)^3+a,
\end{equation}
and 
\begin{equation}\label{case2-Q}
Q(Z)=aZ^2+(a^{q+1}XY(X+Y)+(X+Y)^3)Z+a^qX^3Y^3.
\end{equation}
When proving $[\overline\f_q(X,Y,z):\overline\f_q(X,Y)]=2$, equations \eqref{A(A+B)}, \eqref{B} and \eqref{A=X3U2} are replaced by
\[
\frac{a^{q+1}X^3Y^3}{(a^{q+1}XY(X+Y)+(X+Y)^3)^2}=\frac{A(A+B)}{B^2},
\]
\begin{equation}\label{case2-B}
B=a^{q+1}XY(X+Y)+(X+Y)^3,
\end{equation}
and 
\[
\begin{cases}
A=uX^3,\cr
A+B=vY^3,
\end{cases}
\quad u,v\in\overline\f_q^*.
\]
Then $B=uX^3+vY^3$, which contradicts \eqref{case2-B} since $a^{1+q}\ne 1$.

When proving $[\overline\f_q(X,Y,w):\overline\f_q(X,Y,z)]=3$, equation \eqref{f0} is replaced by
\begin{align}\label{case2-f0}
f_0(X)=\,&a^2 A_1^3 + a^{1+q} A_1 B_1^2 X^3 + a^q B_1^3 X^3 +  a^q B_1^3 X^4 + a^{1+2 q} B_1^3 X^4 +  a^q B_1^3 X^5 \\
&+ a^{1+2 q} B_1^3 X^5 + a^q B_1^3 X^6.\nonumber
\end{align}
Setting $E=A_1/B_1$, the equation $f_0(X)=0$ becomes
\[
a^2 E^3+a^{1+q} X^3E+a^q X^3(1+X)(1+a^{1+q}X+X^2)=0.
\]
It follows that $E\in\overline\f_q[X]$ and $X\mid E$. Write $E=XE_1$. Then
\begin{equation}\label{case2-a2E13}
a^2 E_1^3+a^{1+q} X E_1+a^q (1 + X)(1+a^{1+q}X+X^2)=0.
\end{equation}
Thus $\deg E_1=1$, say $E_1=e(X+\epsilon)$, $e\in\overline\f_q^*$, $\epsilon\in\overline\f_q$. Comparing the coefficients of $X^3$ and $X^0$ in the above gives
\begin{align}\label{case2-a2e3} 
&a^2e^3+a^q=0,\\
&a^2e^3\epsilon^3+a^q=0. \nonumber
\end{align}
Hence $\epsilon^3=1$. Then $(1+a^{1+q}X+X^2)|_{X=\epsilon}\ne 0$. It follows form \eqref{case2-a2E13} that $E_1\mid 1+X$, whence $E_1=e(X+1)$. Then \eqref{case2-a2E13} becomes 
\[
a^2e^3(X+1)^2+a^{1+q}eX+a^q(1+a^{1+q}X+X^2)=0.
\]
Comparing the coefficients of $X$ in the above gives $e=a^q$. But then \eqref{case2-a2e3} gives $a^{1+q}=1$, which is a contradiction.

\begin{rmk}\rm
Most likely, Theorem~\ref{T1.9} also holds for odd $q$.
\end{rmk}

\section{Proof of Theorem~\ref{T1.10}}

\noindent{\bf Theorem~\ref{T1.10}.}
{\it
Let $n\ge 1$, $d\ge 2$ and $a\in\f_{q^2}^*$ be such that $d\mid q+1$, $q\ge (2\max\{n,2d-n\})^4$ and $a^{q+1}\ne 1$. Then $f(X)=f_{q,2,n,d,a}(X)=X^n(X^{d(q-1)}+a)$ is not a PB of $\f_{q^2}$ if one of the following conditions is satisfied.
\begin{itemize}
\item[(i)] $d-n>1$ and $\text{\rm gcd}(d,n+1)$ is a power of $2$.

\item[(ii)] $d+2\le n<2d$ and $\text{\rm gcd}(d,n-1)$ is a power of $2$. 

\item[(iii)] $n\ge 2d$, $\text{\rm gcd}(d,n-1)$ is a power of $2$, and $\text{\rm gcd}(n-d,q-1)=1$.
\end{itemize}
}

\medskip

Assume to the contrary that $f(X)$ is a PB of $\f_{q^2}$. Recall that
\[
G(X)=\frac{a^qX^n+X^{n-d}}{X^d+a}.
\]
Let
\[
N(X,Y)=\text{the numerator of}\ \frac{G(X)-G(Y)}{X-Y}
\]
and
\[
N^*(X,Y,Z)=\text{the homogenization of}\ N(X,Y).
\]
Our objective is to show that $N^*(X,Y,Z)$ is irreducible over $\overline\f_q$ under the conditions in Theorem~\ref{T1.10}. We consider two cases: the case $d-n>1$, which corresponds to (i) in Theorem~\ref{T1.10}, and the case $n-d>1$, which corresponds to (ii) and (iii) in Theorem~\ref{T1.10}.

\subsection{The case $d-n>1$}\label{s4.1}\

We have
\[
G(X)=\frac{a^qX^d+1}{X^{d-n}(X^d+a)},
\]
\begin{align*}
N(X,Y)=\,&-a\frac{X^{d-n}-Y^{d-n}}{X-Y}+\Bigl[a^{q+1}X^{d-n}Y^{d-n}\frac{X^n-Y^n}{X-Y}-\frac{X^{2d-n}-Y^{2d-n}}{X-Y}\Bigr]\cr
&-a^qX^dY^d\frac{X^{d-n}-Y^{d-n}}{X-Y},
\end{align*}
\[
N^*(X,Y,Z)=Q(Z^d),
\]
where
\begin{align}\label{QZ}
Q(Z)=\,&-a\frac{X^{d-n}-Y^{d-n}}{X-Y}Z^2+\Bigl[a^{q+1}X^{d-n}Y^{d-n}\frac{X^n-Y^n}{X-Y}-\frac{X^{2d-n}-Y^{2d-n}}{X-Y}\Bigr]Z\\
&-a^qX^dY^d\frac{X^{d-n}-Y^{d-n}}{X-Y}. \nonumber
\end{align}
We claim that
\begin{equation}\label{gcd=1}
\text{gcd}\Bigl(\frac{X^{d-n}-Y^{d-n}}{X-Y},\,a^{q+1}X^{d-n}Y^{d-n}\frac{X^n-Y^n}{X-Y}-\frac{X^{2d-n}-Y^{2d-n}}{X-Y}\Bigr)=1.
\end{equation}
Since the polynomials in \eqref{gcd=1} are homogeneous, it suffices to prove \eqref{gcd=1} with $Y=1$, i.e.,
\begin{equation}\label{gcd=1Y=1}
\text{gcd}\Bigl(\frac{X^{d-n}-1}{X-1},\,a^{q+1}X^{d-n}\frac{X^n-1}{X-1}-\frac{X^{2d-n}-1}{X-1}\Bigr)=1.
\end{equation}
Let $\zeta$ be a root of $(X^{d-n}-1)/(X-1)$. If $\zeta\ne 1$, then $\zeta^{d-n}=1$. It follows that 
\begin{align*}
&\Bigl(a^{q+1}X^{d-n}\frac{X^n-1}{X-1}-\frac{X^{2d-n}-1}{X-1}\Bigr)\Big|_{X=\zeta}\cr
&=\frac 1{\zeta-1}\bigl(a^{q+1}(\zeta^n-1)-(\zeta^n-1)\bigr)=\frac 1{\zeta-1}(a^{q+1}-1)(\zeta^n-1)\ne0.
\end{align*}
(Note: $\zeta^n\ne 1$ since $\zeta^{d-n}=1$ and $\text{gcd}(n,d)=1$.) If $\zeta=1$, then $d-n\equiv 0\pmod p$, where $p=\text{char}\,\f_q$, whence
\[
\Bigl(a^{q+1}X^{d-n}\frac{X^n-1}{X-1}-\frac{X^{2d-n}-1}{X-1}\Bigr)\Big|_{X=1}=a^{q+1}n-(2d-n)=(a^{q+1}-1)n\ne 0.
\]
This proves \eqref{gcd=1Y=1} and hence \eqref{gcd=1}. By \eqref{gcd=1}, $N^*(X,Y,Z)$ is a primitive polynomial in $Z$ over $\overline\f_q[X,Y]$, i.e., the gcd of its coefficients in $\overline\f_q[X,Y]$ is 1. Thus, to prove that $N^*(X,Y,Z)$ is irreducible in $\overline\f_q[X,Y,Z]$, it suffices to show that it is irreducible in $\overline\f_q(X,Y)[Z]$. Let $w$ be a root of $N^*(X,Y,Z)$ for $Z$ and let $z=w^d$. Then $z$ is a root of $Q(Z)$, and it suffices to show that $[\overline\f_q(X,Y,z):\overline\f_q(X,Y)]=2$ and $[\overline\f_q(X,Y,w):\overline\f_q(X,Y,z)]=d$.

\[
\beginpicture
\setcoordinatesystem units <4mm,4mm> point at 0 0

\setlinear
\plot 0 1  0 3 /
\plot 0 5  0 7 /

\put {$\overline{\f}_q(X,Y)$} at 0 0
\put {$\overline{\f}_q(X,Y,z)$} at 0 4
\put {$\overline{\f}_q(X,Y,w)$}  at 0 8
\put {$\scriptstyle 2$} [l] at 0.2 2
\put {$\scriptstyle d$} [l] at 0.2 6
\endpicture
\]


\subsubsection{Proof that $[\overline\f_q(X,Y,z):\overline\f_q(X,Y)]=2$}\

Assume to the contrary that $Q(Z)$ is reducible over $\overline\f_q(X,Y)$.

{\em First assume that $q$ is odd}. The discriminant of $Q$ is 
\[
D=\Bigl[a^{q+1}X^{d-n}Y^{d-n}\frac{X^n-Y^n}{X-Y}-\frac{X^{2d-n}-Y^{2d-n}}{X-Y} \Bigr]^2-4a^{q+1}X^dY^d\Bigl(\frac{X^{d-n}-Y^{d-n}}{X-Y} \Bigr)^2.
\]
By assumption, $D=\Delta^2$ for some $\Delta\in\overline\f_q[X,Y]$. Then
\begin{align}\label{4aq+1}
&4a^{q+1}X^dY^d\Bigl(\frac{X^{d-n}-Y^{d-n}}{X-Y} \Bigr)^2=\\
&\Bigl[a^{q+1}X^{d-n}Y^{d-n}\frac{X^n-Y^n}{X-Y}-\frac{X^{2d-n}-Y^{2d-n}}{X-Y}+\Delta \Bigr]\cr
&\cdot \Bigl[a^{q+1}X^{d-n}Y^{d-n}\frac{X^n-Y^n}{X-Y}-\frac{X^{2d-n}-Y^{2d-n}}{X-Y}-\Delta \Bigr]. \nonumber
\end{align}
Let $\delta$ be the gcd of the two factors on the right side of \eqref{4aq+1}. Then 
\[
\delta\Bigm | a^{q+1}X^{d-n}Y^{d-n}\frac{X^n-Y^n}{X-Y}-\frac{X^{2d-n}-Y^{2d-n}}{X-Y}
\]
and 
\[
\delta\bigm | \frac{X^{d-n}-Y^{d-n}}{X-Y}.
\]
It follows from \eqref{gcd=1} that $\delta=1$.

Now from \eqref{4aq+1}, we have
\[
\begin{cases}
\displaystyle a^{q+1}X^{d-n}Y^{d-n}\frac{X^n-Y^n}{X-Y}-\frac{X^{2d-n}-Y^{2d-n}}{X-Y}+\Delta=X^dU,\vspace{0.4em}\cr
\displaystyle a^{q+1}X^{d-n}Y^{d-n}\frac{X^n-Y^n}{X-Y}-\frac{X^{2d-n}-Y^{2d-n}}{X-Y}-\Delta=Y^dV,
\end{cases}
\]
for some $U,V\in\overline\f_q[X,Y]$. It follows that
\begin{equation}\label{xdU+ydV}
2a^{q+1}X^{d-n}Y^{d-n}\frac{X^n-Y^n}{X-Y}-2\frac{X^{2d-n}-Y^{2d-n}}{X-Y}=X^dU+Y^dV.
\end{equation}
The coefficient of $X^{d-1}Y^{d-n}$ on the left side of \eqref{xdU+ydV} is $2(a^{q+1}-1)\ne 0$, while the coefficient of the same term on the right side of \eqref{xdU+ydV} is 0. This is a contradiction.

\medskip
{\em Next, assume that $q$ is even}. Since $Q(Z)$ is assumed to be reducible over $\overline\f_q(X,Y)$, we have
\[
\frac{\displaystyle a^{q+1}X^dY^d\Bigl(\frac{X^{d-n}+Y^{d-n}}{X+Y}\Bigr)^2}{\displaystyle \Bigl[a^{q+1}X^{d-n}Y^{d-n}\frac{X^n+Y^n}{X+Y}+\frac{X^{2d-n}+Y^{2d-n}}{X+Y}\Bigr]^2}=\Bigl(\frac AB\Bigr)^2+\frac AB=\frac{A(A+B)}{B^2},
\]
where $A,B\in\overline\f_q[X,Y]$, $\text{gcd}(A,B)=1$. By \eqref{gcd=1}, the numerator and the denominator on the left side are relatively prime. Therefore we may assume
\begin{equation}\label{B=aq+1}
B=a^{q+1}X^{d-n}Y^{d-n}\frac{X^n+Y^n}{X+Y}+\frac{X^{2d-n}+Y^{2d-n}}{X+Y},
\end{equation}
\[
A(A+B)=a^{q+1}X^dY^d\Bigl(\frac{X^{d-n}+Y^{d-n}}{X+Y}\Bigr)^2.
\]
Since $\text{gcd}(A,A+B)=1$, we have
\[
\begin{cases}
A=X^dU^2,\cr
A+B=Y^dV^2,
\end{cases}
\]
where $U,V\in\overline\f_q[X,Y]$, $UV=(X^{d-n}+Y^{d-n})/(X+Y)$. Then
\begin{equation}\label{B-qeven}
B=X^dU^2+Y^dV^2.
\end{equation}
The coefficient of $X^{d-1}Y^{d-n}$ in \eqref{B=aq+1} is $a^{q+1}+1\ne 0$. However, the coefficient of $X^{d-1}Y^{d-n}$ in \eqref{B-qeven} is 0, which is a contradiction.

\subsubsection{Proof that $[\overline\f_q(X,Y,w):\overline\f_q(X,Y,z)]=d$}\

To prove this claim, it suffices to show that for each prime divisor $t$ of $d$, $z$ is not a $t$th power in $\overline\f_q(X,Y,z)$. In \eqref{QZ}, divide $Q(Z)$ by its leading coefficient and set $Y=1$, the result is 
\begin{equation}\label{Q1}
Q_1(Z)=Z^2-\frac{\displaystyle a^{q+1}X^{d-n}\frac{X^n-1}{X-1}-\frac{X^{2d-n}-1}{X-1}}{\displaystyle a\frac{X^{d-n}-1}{X-1}}Z+a^{q-1}X^d,
\end{equation}
which is irreducible in $\overline\f_q(X)[Z]$. Let $z_1$ be a root of $Q_1(Z)$. By \cite[\S 3.3, Claim II$'$]{Hou-FFA-2018}, it suffices to show that for each prime divisor $t$ of $d$, $z_1$ is not a $t$th power in $\overline\f_q(X,z_1)$.

Let $\overline{(\ )}$ denote the nonidentity automorphism in $\text{Aut}(\overline\f_q(X,z_1)/\overline\f_q(X))$. We have
\begin{equation}\label{zbarz}
z_1\bar z_1=a^{q-1}X^d,
\end{equation}
\begin{equation}\label{z+barz}
z_1+\bar z_1=\frac{\displaystyle a^{q+1}X^{d-n}\frac{X^n-1}{X-1}-\frac{X^{2d-n}-1}{X-1}}{\displaystyle a\frac{X^{d-n}-1}{X-1}}.
\end{equation}
Let $d-n=p^md'$, where $p=\text{char}\,\f_q$, $p\nmid d'$. Let $\zeta\in\overline\f_q$ be a primitive $d'$th root of unity. Let $\frak p$ be the place of the rational function field $\overline\f_q(X)$ which is the zero of $X-\zeta$, and let $\frak P$ be a place of $\overline\f_q(X,z_1)$ such that $\frak P\mid \frak p$. Then $\frak P$ is unramified over $\frak p$ (\cite[III 7.3 (b) and 7.8 (b)]{Stichtenoth-1993}). From \eqref{zbarz} and \eqref{z+barz}, we have
\begin{equation}\label{vz}
\nu_{\frak p}(z_1\bar z_1)=0,
\end{equation}
\begin{equation}\label{vz+}
\nu_{\frak p}(z_1+\bar z_1)=
\begin{cases}
-p^m&\text{if}\ d'>1,\cr
-p^m+1&\text{if}\ d'=1,
\end{cases}
\end{equation}
where $\nu_{\frak p}$ is the valuation of $\overline\f_q(X)$ at $\frak p$. Equation~\eqref{vz+} is derived as follows: First, note that in \eqref{z+barz},
\begin{equation}\label{vp1}
\nu_{\frak p}\Bigl(\frac{X^{d-n}-1}{X-1} \Bigr)=
\begin{cases}
p^m&\text{if}\ d'>1,\cr
p^m-1&\text{if}\ d'=1.
\end{cases}
\end{equation}
Next, write
\begin{equation}\label{write}
a^{q+1}X^{d-n}\frac{X^n-1}{X-1}-\frac{X^{2d-n}-1}{X-1}=(a^{q+1}X^{d-n}-1)\frac{X^n-1}{X-1}-X^n\frac{X^{2(d-n)}-1}{X-1}.
\end{equation}
If $d'>1$, the value of \eqref{write} at $X=\zeta$ is 
\[
(a^{q+1}-1)\frac{\zeta^n-1}{\zeta-1}\ne 0.
\]
If $d'=1$, we have $m>0$ (since $d-n>1$), whence $d-n\equiv 0\pmod p$. Then $n\not\equiv 0$ since $\text{gcd}(n,d)=1$. Therefore, the value of \eqref{write} at $X=\zeta$ ($=1$) is 
\[
(a^{q+1}-1)n-2(d-n)=(a^{q+1}-1)n\ne 0.
\] 
Hence we always have
\begin{equation}\label{vp2}
\nu_{\frak p}\Bigl(a^{q+1}X^{d-n}\frac{X^n-1}{X-1}-\frac{X^{2d-n}-1}{X-1} \Bigr)=0.
\end{equation}
Combining \eqref{z+barz}, \eqref{vp1} and \eqref{vp2} gives \eqref{vz+}.

Write \eqref{vz} and \eqref{vz+} as
\[
\nu_{\frak P}(z_1)+\nu_{\frak P}(\bar z_1)=0,
\]
\[
\nu_{\frak P}(z_1+\bar z_1)=
\begin{cases}
-p^m&\text{if}\ d'>1,\cr
-p^m+1&\text{if}\ d'=1,
\end{cases}
\]
where $\nu_{\frak P}$ is the valuation of $\overline\f_q(X,z_1)$ at $\frak P$. It follows that
\begin{equation}\label{vP}
\{\nu_{\frak P}(z_1),\nu_{\frak P}(\bar z_1)\}=\begin{cases}
\{\pm p^m\}&\text{if}\ d'>1,\cr
\{\pm(p^m-1)\}&\text{if}\ d'=1.
\end{cases}
\end{equation}
Assume to the contrary that $z_1$ is a $t$th power  in $\overline\f_q(X,z_1)$. Then $t\mid\nu_{\frak P}(z_1)$. If $d'>1$, the by \eqref{vP}, $t\mid p^m$, whence $t\mid d-n$. This is impossible since $t\mid d$ and $\text{gcd}(n,d)=1$. Therefore, $d'=1$ and $d-n=p^m$. By \eqref{vP}, $t\mid p^m-1=d-n-1$. Since $t\mid\text{gcd}(d,d-n-1)=\text{gcd}(d,n+1)$ and $\text{gcd}(d,n+1)$ is a power of $2$, we have $t=2$. It follows that $p$ is odd.

Recall that $Q_1(z_1)=0$, where $Q_1(Z)$ is given in \eqref{Q1}. Using \eqref{write} and $d-n=p^m$, the equation $Q_1(z_1)=0$ can be written as
\[
u^2=\delta,
\]
where
\[
u=z_1-\gamma,
\]
\[
\gamma=\frac 12\frac{\displaystyle (a^{q+1}X^{p^m}-1)\frac{X^n-1}{X-1}-X^n(X+1)^{p^m}(X-1)^{p^m-1}}{a(X-1)^{p^m-1}},
\]
and
\[
\delta=\gamma^2-a^{q-1}X^{p^m+n}.
\]
By assumption, there exist $\alpha,\beta\in\overline{\Bbb F}_q(X)$ such that
\[
(\alpha u+\beta)^2=u+\gamma,
\] 
i.e.,
\[
\alpha^2\delta+\beta^2+2\alpha\beta u=u+\gamma.
\]
Since $u$ is of degree 2 over $\overline\f_q(X)$, we have
\[
\begin{cases}
\alpha^2\delta+\beta^2=\gamma,\cr
2\alpha\beta=1.
\end{cases}
\]
Letting $\tau=\alpha/\beta$, we have
\begin{equation}\label{eq-tau}
1+\delta\tau^2-2\gamma\tau=0
\end{equation}
and
\begin{equation}\label{tau}
\tau=2\alpha^2.
\end{equation}
Fortunately, \eqref{eq-tau} has an explicit solution
\[
\tau=\frac 1\delta(\gamma\pm a^{(q-1)/2}X^{(p^m+n)/2})=\frac 1{\gamma\mp a^{(q-1)/2}X^{(p^m+n)/2}}.
\]
In the above,
\begin{align*}
&\gamma\mp a^{(q-1)/2}X^{(p^m+n)/2}=\cr
&\frac 1{2a(X-1)^{p^m-1}}\Bigl[(a^{q+1}X^{p^m}-1)\frac{X^n-1}{X-1}-X^n(X+1)^{p^m}(X-1)^{p^m-1}\cr
&\kern 7.4em \mp 2a^{(q+1)/2}X^{(p^m+n)/2}(X-1)^{p^m-1}\Bigr].
\end{align*}
Since $\tau$ is square in $\overline\f_q(X)$ (by \eqref{tau}), 
\[
h:= (1-a^{q+1}X^{p^m})\frac{X^n-1}{X-1}+X^n(X+1)^{p^m}(X-1)^{p^m-1}+ 2\epsilon X^{(p^m+n)/2}(X-1)^{p^m-1},
\]
where $\epsilon=\pm a^{(q+1)/2}$, is a square in $\overline\f_q(X)$, say $h=g^2$, where $g\in\overline\f_q[X]$ is monic of degree $p^m+(n-1)/2$. Note that
\begin{align*}
h=\,&\frac{X^n-1}{X-1}+(X^n+X^{p^m+n})\frac{X^{p^m}-1}{X-1}-a^{q+1}X^{p^m}\frac{X^n-1}{X-1}+ 2\epsilon X^{(p^m+n)/2}\frac{X^{p^m}-1}{X-1}\cr
=\,&(1+\cdots+X^{2p^m+n-1})\cr
&-a^{q+1}(X^{p^m}+\cdots+X^{p^m+n-1})\cr
&+ 2\epsilon (X^{(p^m+n)/2}+\cdots+X^{(3p^m+n)/2-1}),
\end{align*}
which is self-reciprocal. Hence $g^*=\pm g$, where $g^*$ is the reciprocal polynomial of $g$. (In fact, If $g^*=g$, but we do not need to be precise.) Let
\begin{align*}
H\,&=(X-1)h\cr
&=(1-a^{q+1}X^{p^m})(X^n-1)+X^n(X+1)^{p^m}(X-1)^{p^m}+ 2\epsilon X^{(p^m+n)/2}(X-1)^{p^m}.
\end{align*}
Then
\[
H'=(1-a^{q+1}X^{p^m})nX^{n-1}+nX^{n-1}(X+1)^{p^m}(X-1)^{p^m}+ \epsilon n X^{(p^m+n)/2-1}(X-1)^{p^m}.
\]
(When computing $H'$, we used the assumption that $m>0$.)
Let
\begin{align*}
K&=H-n^{-1}XH'=-(1-a^{q+1}X^{p^m})+\epsilon X^{(p^m+n)/2}(X-1)^{p^m}\cr
&=-1+a^{q+1}X^{p^m}-\epsilon X^{(p^m+n)/2}+\epsilon X^{(p^m+n)/2+p^m}.
\end{align*}
The reciprocal of $K$ is
\[
K^*=\epsilon-\epsilon X^{p^m}+a^{q+1}X^{(p^m+n)/2}-X^{(p^m+n)/2+p^m}.
\]
Since $g\mid K$ and $g$ is self-reciprocal, we also have $g=\pm g^*\mid K^*$. Thus $g$ divides
\[
K+\epsilon K^*=-1+\epsilon^2+(-\epsilon+\epsilon a^{q+1})X^{(p^m+n)/2}=(a^{q+1}-1)(1+\epsilon X^{(p^m+n)/2}).
\]
This is a contradiction since 
\[
\frac{p^m+n}2<p^m+\frac{n-1}2=\deg g.
\]

\subsection{The Case $n-d>1$}\

In this case,
\[
G(X)=\frac{a^qX^n+X^{n-d}}{X^d+a},
\]
\begin{align*}
N(X,Y)=\,&a\frac{X^{n-d}-Y^{n-d}}{X-Y}+\Bigl[a^{q+1}\frac{X^n-Y^n}{X-Y}+X^dY^d\frac{X^{n-2d}-Y^{n-2d}}{X-Y}\Bigr]\cr
&+a^qX^dY^d\frac{X^{n-d}-Y^{n-d}}{X-Y},
\end{align*}
\[
N^*(X,Y,Z)=Q(Z^d),
\]
where 
\begin{align*}
Q(Z)=\,&a\frac{X^{n-d}-Y^{n-d}}{X-Y}Z^2+\Bigl[a^{q+1}\frac{X^n-Y^n}{X-Y}+X^dY^d\frac{X^{n-2d}-Y^{n-2d}}{X-Y}\Bigr]Z\cr
&+a^qX^dY^d\frac{X^{n-d}-Y^{n-d}}{X-Y}.
\end{align*}
We claim that
\begin{equation}\label{4.2-gcd}
\gcd\Big(\frac{X^{n-d}-Y^{n-d}}{X-Y},\, a^{q+1} \frac{X^n-Y^n}{X-Y}+X^d Y^d \frac{X^{n-2 d}-Y^{n-2 d}}{X-Y}\Big)=1,
\end{equation}
equivalently,
\begin{equation}\label{4.2-gcd1}
\gcd\Bigl(\frac{X^{n-d}-1}{X-1},\, a^{q+1} \frac{X^n-1}{X-1}+X^d \frac{X^{n-2 d}-1}{X-1}\Bigr)=1.
\end{equation}
Let $\zeta$ be a root of $(X^{n-d}-1)/(X-1)$. If $\zeta \ne 1$, then
\begin{align*}
\Bigl(a^{q+1} \frac{X^n-1}{X-1}+X^d \frac{X^{n-2 d}-1}{X-1}\Bigr)\big|_{X=\zeta}=\,&\frac1{\zeta-1}\bigl(a^{q+1} (\zeta^n-1)+\zeta^d (\zeta^{n-2 d}-1)\bigr)\cr
=\,&\frac1{\zeta-1} (a^{q+1}-1)(\zeta^n-1)\ne0.
\end{align*}
If $\zeta=1$, then $n-d\equiv 0\pmod p$, and
\[
\Big(a^{q+1} \frac{X^n-1}{X-1}+X^d \frac{X^{n-2 d}-1}{X-1}\Big)\Big|_{X=1}=a^{q+1}n+n-2d =n(a^{q+1}-1)\ne 0.
\] 
So \eqref{4.2-gcd1} and \eqref{4.2-gcd} hold. Therefore $Q(Z)$ is a primitive polynomial over $\overline\f_q[X,Y]$.

Let 
\begin{align}\label{4.2-Q1}
Q_1(Z)\,&=\Bigl[\Bigl(a\frac{X^{n-d}-Y^{n-d}}{X-Y} \Bigr)^{-1}Q(Z)\Bigr]\Big |_{Y=1}\\
&=Z^2+\frac{\displaystyle a^{q+1}\frac{X^n-1}{X-1}+X^d \frac{X^{n-2 d}-1}{X-1}}{\displaystyle a\frac{X^{n-d}-1}{X-1}} Z+a^{q-1}X^d\in\overline\f_q(X)[Z]. \nonumber
\end{align}
Following the arguments in Section~\ref{s4.1}, we only have to prove the following two claims:

\smallskip
\noindent{\bf Claim 1.} $Q(Z)$ is irreducible in $\overline\f_q(X,Y)[Z]$.

\smallskip
\noindent{\bf Claim 2.} Let $z$ be a root of $Q_1(Z)$ and $t$ be a prime divisor of $d$. Then $z$ is not a $t$th power in $\overline\f_q(X,z)$.

\subsubsection{Proof of Claim 1}\

Assume to the contrary that $Q(Z)$ is reducible in $\overline\f_q(X,Y)[Z]$.

{\em First, assume that $q$ is odd}. The discriminant of $Q(Z)$ is 
\[
D=\Bigl[\frac{a^{q+1}(X^n-Y^n)}{X-Y}+\frac{X^d Y^d (X^{n-2 d}-Y^{n-2 d})}{X-Y}\Bigr]^2-\frac{4 a^{q+1} X^d Y^d (X^{n-d}-Y^{n-d})^2}{(X-Y)^2}.
\]
By assumption, $D=\Delta^2$ for some $\Delta\in\overline\f_q[X,Y]$. Then
\begin{align*}
\frac{4 a^{q+1} X^d Y^d \left(X^{n-d}-Y^{n-d}\right)^2}{(X-Y)^2}=\,&\Big(a^{q+1}\frac{X^n-Y^n}{X-Y}+X^d Y^d\frac{X^{n-2 d}-Y^{n-2 d}}{X-Y}+\Delta\Big)\cr
&\cdot\Big(a^{q+1}\frac{X^n-Y^n}{X-Y}+X^d Y^d\frac{X^{n-2 d}-Y^{n-2 d}}{X-Y}-\Delta\Big).
\end{align*}	
In the above, the two factors on the right side are relatively prime. (This follows from \eqref{4.2-gcd}.) Therefore, we may assume 
\begin{equation}\label{system}
\begin{cases}
\displaystyle a^{q+1}\frac{X^n-Y^n}{X-Y}+X^d Y^d\frac{X^{n-2 d}-Y^{n-2 d}}{X-Y}+\Delta= 2a^{(q+1)/2}X^d U^2,\vspace{0.5em}\cr
\displaystyle a^{q+1}\frac{X^n-Y^n}{X-Y}+X^d Y^d\frac{X^{n-2 d}-Y^{n-2 d}}{X-Y}-\Delta=2a^{(q+1)/2}Y^d V^2,
\end{cases}
\end{equation}
for some $U,V \in \overline{\mathbb{F}}_q[X,Y]$ with 
\begin{equation}\label{eq:UV}
UV=\frac{X^{n-d}-Y^{n-d}}{X-Y}. 
\end{equation}
Then
\begin{equation}\label{XdU2+}
\frac{a^{q+1} \left(X^n-Y^n\right)}{X-Y}+\frac{X^d Y^d \left(X^{n-2 d}-Y^{n-2 d}\right)}{X-Y}=a^{(q+1)/2}(X^dU^2+Y^dV^2).
\end{equation}
Let $L$ denote the left side of \eqref{XdU2+}. We have
\begin{align*}
L=\,&a^{q+1}(Y^{n-1}+XY^{n-2}+\cdots+X^{n-1})\cr
&+\begin{cases}
X^dY^{n-d-1}+X^{d+1}Y^{n-d-2}+\cdots+X^{n-d-1}Y^d&\text{if}\ n\ge 2d,\vspace{0.2em}\cr
-X^{n-d}Y^{d-1}-X^{n-d+1}Y^{d-2}-\cdots-X^{d-1}Y^{n-d}&\text{if}\ d+2\le n< 2d.
\end{cases}
\end{align*}
If $d+2\le n<2d$, the coefficient of $X^{d-1}Y^{n-d}$ in $L$ is $a^{q+1}-1\ne 0$, while the the coefficient of $X^{d-1}Y^{n-d}$ on the right side of \eqref{XdU2+} is 0, which is a contradiction. Hence Theorem~\ref{T1.10} (iii) holds. In particular, $\text{gcd}(n-d,q-1)=1$.

Since 
\[
\Delta(Y,X)^2=D(Y,X)=D(X,Y)=\Delta(X,Y)^2.
\]
we have $\Delta(Y,X)=\pm\Delta(X,Y)$. If $\Delta(Y,X)=\Delta(X,Y)$, then by \eqref{system}, $X^dU(X,Y)^2=Y^dU(Y,X)^2$. Then $Y\mid U(X,Y)$, which is a contradiction to \eqref{eq:UV}. Hence $\Delta(Y,X)=-\Delta(X,Y)$, and by \eqref{system}, 
\begin{equation}\label{U2=V2}
U(Y,X)^2=V(X,Y)^2.
\end{equation}  
By \eqref{U2=V2} and \eqref{eq:UV}, we have
\begin{align}\label{U2}
U(X,Y)^2\,&=\alpha\prod_{i=1}^{(n-d-1)/2}(X-\epsilon_i Y)^2,\\ 
\label{V2}
V(X,Y)^2\,&=\alpha^{-1}\prod_{i=1}^{(n-d-1)/2}(X-\epsilon_i^{-1} Y)^2,
\end{align}
where $\alpha,\beta\in\overline\f_q$ and $\epsilon_i\in\overline\f_q^*$ are such that
\[
\frac{X^{n-d}-Y^{n-d}}{X-Y}=\prod_{i=1}^{(n-d-1)/2}\bigl[(X-\epsilon_i Y)(X-\epsilon_i^{-1} Y)\bigr].
\]
We have
\begin{align*}
\alpha\,&=U(1,0)^2  \kern6em \text{(by \eqref{U2})}\cr
&=V(0,1)^2 \kern6em \text{(by \eqref{U2=V2})}\cr
&=\alpha^{-1}\prod_{i=1}^{(n-d-1)/2}\epsilon_i^{-2}\qquad\text{(by \eqref{V2})}.
\end{align*}
It follows that
\begin{equation}\label{alpha2}
\alpha^2=\prod_{i=1}^{(n-d-1)/2}\epsilon_i^{-2}.
\end{equation}
On the other hand, comparing the coefficients of $X^{n-1}$ in \eqref{XdU2+} gives $a^{q+1}=a^{(q+1)/2}\cdot\alpha$, i.e., $\alpha=a^{(q+1)/2}$. Since the $\epsilon_i$'s are roots of $X^{n-d}-1$, we have
\[
a^{(q+1)(n-d)}=\alpha^{2(n-d)}=1\qquad\text{(by \eqref{alpha2})}.
\]
This, combined with $a^{(q+1)(q-1)}=1$ and $\text{gcd}(n-d,q-1)=1$, implies that $a^{q+1}=1$, which is a contradiction.

\medskip
{\em Next, assume that $q$ is even}. Since $Q(Z)$ is assumed to be reducible over $\overline\f_q(X,Y)$, there are $A,B \in \overline{\mathbb{F}}_q[X,Y]$, relatively prime, such that
\[
	\frac{\displaystyle a^{q+1} X^d Y^d \Bigl(\frac{X^{n-d}+Y^{n-d}}{X+Y}\Bigr)^2}{\displaystyle\Bigl(a^{q+1} \frac{X^n+Y^n}{X+Y}+X^d Y^d \frac{X^{n-2 d}+Y^{n-2 d}}{X+Y}\Bigr)^2}=\Big(\frac{A}{B}\Big)^2+\frac{A}{B}=\frac{A(A+B)}{B^2}.
\]
In the above, the numerator and the denominator on the left side are relatively prime (by \eqref{4.2-gcd}). Thus
\begin{equation}\label{B1}
B=a^{q+1} \frac{X^n+Y^n}{X+Y}+X^d Y^d \frac{X^{n-2 d}+Y^{n-2 d}}{X+Y}
\end{equation}
and 
\[
A(A+B)=a^{q+1} X^d Y^d \Bigl(\frac{X^{n-d}+Y^{n-d}}{X+Y}\Bigr)^2.
\]
We may assume that
\[
\begin{cases}
A=X^dU^2, \\
A+B=Y^dV^2,
\end{cases}
\]
for some $U,V \in \overline{\mathbb{F}}_q[X,Y]$ such that $UV=(X^{n-d}+Y^{n-d})/(X+Y)$. Then
\begin{equation}\label{B2}
B=X^d U^2+Y^dV^2.	
\end{equation}
By \eqref{B1},
\begin{align}\label{B3}
B=\,&a^{q+1}(Y^{n-1}+XY^{n-2}+\cdots+X^{n-1})\\
&+\begin{cases}
X^dY^{n-d-1}+X^{d+1}Y^{n-d-2}\cdots+X^{n-d-1}Y^d&\text{if}\ n\ge 2d,\cr
X^{n-d}Y^{d-1}+X^{n-d+1}Y^{d-2}+\cdots+X^{d-1}Y^{n-d}&\text{if}\ d+2\le n< 2d.
\end{cases}\nonumber
\end{align}
Since we assume $d>1$ and $n-d>1$, the coefficient of $XY^{n-2}$ in \eqref{B3} is $a^{q+1}\ne 0$. (Even if we allowed $d=1$ or $n-d=1$, the coefficient of $XY^{n-2}$ in \eqref{B3} would be $a^{q+1}+1$, which is still nonzero.) However, the coefficient of $XY^{n-2}$ in \eqref{B2} is 0, which is a contradiction.

\subsubsection{Proof of Claim 2}\

Recall that $Q_1(Z)$ is given in \eqref{4.2-Q1}. Let $z$ be a root of $Q_1(Z)$ and $t$ be a prime divisor of $d$. Assume to the contrary that $z$ is a $t$th power in $\overline\f_q(X,z)$. Let $\overline{(\ )}$ be the nonidentity automorphism in $\text{Aut}(\overline\f_q(X,z)/\overline\f_q(X))$. Then 
\begin{equation}\label{z1}
z\bar z=a^{q-1}X^d,
\end{equation}
\begin{align}\label{z2}
z+\bar z\,&=-\frac{\displaystyle a^{q+1}\frac{X^n-1}{X-1}+X^d\frac{X^{n-2d}-1}{X-1}}{\displaystyle a\frac{X^{n-d}-1}{X-1}}\cr
&=-\frac{\displaystyle (a^{q+1}-1)\frac{X^d-1}{X-1}+(a^{q+1}X^d+1)\frac{X^{n-d}-1}{X-1}}{\displaystyle a\frac{X^{n-d}-1}{X-1}}.
\end{align}
Write $n-d=p^md'$, where $p\nmid d'$, and let $\zeta$ be a primitive $d'$th root of unity. Let $\frak p$ be the place of the rational function field $\overline\f_q(X)$ which is the zero of $X-\zeta$, and let $\frak P$ be a place of $\overline\f_q(X,z)$ such that $\frak P\mid \frak p$. Then $\frak P$ is unramified over $\frak p$ (\cite[III 7.3 (b) and 7.8 (b)]{Stichtenoth-1993}). From \eqref{z1} and \eqref{z2}, we have
\begin{equation}\label{z3}
\nu_{\frak p}(z\bar z)=0,
\end{equation}
\begin{equation}\label{z4}
\nu_{\frak p}(z+\bar z)=
\begin{cases}
-p^m&\text{if}\ d'>1,\cr
-p^m+1&\text{if}\ d'=1,
\end{cases}
\end{equation}
(The proof of \eqref{z4} is similar to that of \eqref{vz+} and uses the assumption $n-d>1$ in the case $d'=1$.) Therefore,
\[
\nu_\frak P(z)+\nu_\frak P(\bar z)=0,
\]
\[
\nu_\frak P(z+\bar z)=\begin{cases}
-p^m&\text{if}\ d'>1,\cr
-p^m+1&\text{if}\ d'=1,
\end{cases}
\]
and it follows that
\[
\{\nu_\frak P(z),\nu_\frak P(\bar z)\}=\begin{cases}
\{\pm p^m\}&\text{if}\ d'>1,\cr
\{\pm(p^m-1)\}&\text{if}\ d'=1.
\end{cases}
\]
Since $z$ is $t$th power in $\overline\f_q(X,z)$, we have $t\mid\nu_\frak P(z)$. If $d'>1$, then $t=p$. It follows from $t\mid d$ and $t\mid n-d$ that $\text{gcd}(n,d)\ne 1$, which is a contradiction. So we must have $d'=1$ and $n-d=p^m$, $m>0$. Then $t\mid p^m-1=n-d-1$. Since $t\mid\text{gcd}(n-d-1,d)=\text{gcd}(n-1,d)$, where $\text{gcd}(n-1,d)$ is a power of 2 (by assumption), we have $t=2$. Consequently, $p$ is odd. 

The equation $Q_1(z)=0$ can be written as
\[
u^2=\delta,
\]
where
\[
u=z-\gamma,
\]
\[
\gamma=-\frac 12\frac{\displaystyle (a^{q+1}-1)\frac{X^d-1}{X-1}+(a^{q+1}X^d+1)(X-1)^{p^m-1}}{a(X-1)^{p^m-1}},
\]
and
\[
\delta=\gamma^2-a^{q-1}X^d.
\]
By assumption, there exist $\alpha,\beta\in\overline{\Bbb F}_q(X)$ such that
\[
(\alpha u+\beta)^2=u+\gamma,
\] 
i.e.,
\[
\alpha^2\delta+\beta^2+2\alpha\beta u=u+\gamma.
\]
So
\[
\begin{cases}
\alpha^2\delta+\beta^2=\gamma,\cr
2\alpha\beta=1.
\end{cases}
\]
Letting $\tau=\alpha/\beta$, we have
\begin{equation}\label{eq-tau1}
1+\delta\tau^2-2\gamma\tau=0
\end{equation}
and
\begin{equation}\label{tau1}
\tau=2\alpha^2.
\end{equation}
Equation~\eqref{eq-tau1} has an explicit solution
\[
\tau=\frac 1\delta(\gamma\pm a^{(q-1)/2}X^{d/2})=\frac 1{\gamma\mp a^{(q-1)/2}X^{d/2}}=\frac{-2a(X-1)^{p^m-1}}{h(X)},
\]
where
\[
h(X)=(a^{q+1}-1)\frac{X^d-1}{X-1}+(a^{q+1}X^d+1)(X-1)^{p^m-1}+2\epsilon X^{d/2}(X-1)^{p^m-1}
\]
and $\epsilon=\pm a^{(q+1)/2}$. By \eqref{tau1}, $h(X)$ is a square in $\overline\f_q[X]$, say $h=g^2$ for some $g\in\overline\f_q[X]$ with $\deg g=(d+p^m-1)/2$. Note that
\begin{align*}
&h(X)= a^{q+1}\Bigl(\frac{X^d-1}{X-1}+X^d\frac{X^{p^m}-1}{X-1} \Bigr)+\Bigl(\frac{X^{p^m}-1}{X-1}-\frac{X^d-1}{X-1} \Bigr)+2\epsilon X^{d/2}\frac{X^{p^m}-1}{X-1}\cr
&=a^{q+1}(1+\cdots+X^{p^m+d-1})+(X^d+\cdots+X^{p^m-1})+2\epsilon(X^{d/2}+\cdots+X^{p^m+d/2-1}),
\end{align*}
which is self-reciprocal. It follows that $g^*=\pm g$, where $g^*$ is the reciprocal polynomial of $g$. Let 
\[
H=(X-1)h=(a^{q+1}-1)(X^d-1)+(a^{q+1}X^d+1)(X-1)^{p^m}+2\epsilon X^{d/2}(X-1)^{p^m}.
\]
Then 
\[
H'=(a^{q+1}-1)dX^{d-1}+a^{q+1}dX^{d-1}(X-1)^{p^m}+\epsilon dX^{d/2-1}(X-1)^{p^m}.
\]
Let
\[
K=-H+d^{-1}XH'=a^{q+1}-X^{p^m}+\epsilon X^{d/2}-\epsilon X^{p^m+d/2}.
\]
The reciprocal of $K$ is
\[
K^*=-\epsilon+\epsilon X^{p^m}-X^{d/2}+a^{q+1}X^{p^m+d/2}.
\]
Since $g\mid K$, we have $g=\pm g^*\mid K^*$. Hence $g$ divides
\[
\epsilon K+K^*=(a^{q+1}-1)(\epsilon+X^{d/2}).
\]
This is a contradiction since
\[
\frac d2<\frac{d+p^m-1}2=\deg g.
\]

The proof of Theorem~\ref{T1.10} is now complete.

\subsection{Final Remarks}\

Theorem~\ref{T1.10} leaves ample room for improvement, by which we mean nonexistence results of PB under conditions that are weaker than or not covered by (i) -- (iii) in Theorem~\ref{T1.10}. While some improvements may be obtained by fine tuning the techniques demonstrated in the present paper, breakthroughs may require new methods or substantially new elements in the current approach.

The cases $d-n=\pm 1$ appear to be special. These are the two cases not covered by Theorem~\ref{T1.10} and there are indeed infinite classes of PBs in these two cases with $e=2$ (Results~\ref{R1.2} and \ref{R1.4}). A natural question is this: When $d-n=\pm 1$ and $e>2$, are there infinite classes of PBs of the form $f_{q,e,n,d,a}(X)=X^n(X^{d(q-1)}+a)$ of $\f_{q^e}$?
 
\section*{Acknowledgements}
	The research of Vincenzo Palozzi Lavorante was partially supported  by the Italian National Group for Algebraic and Geometric Structures and their Applications (GNSAGA - INdAM).




\begin{thebibliography}{99}

\bibitem{Hou-CM-2015} 
X. Hou, {\it A survey of permutation binomials and trinomials over finite fields}, Proceedings of the 11th International Conference on Finite Fields and Their Applications, Magdeburg, Germany, 2013, Contemporary Mathematics {\bf 632}, 177 -- 191, 2015.

\bibitem{Hou-FFA-2015a} X. Hou, {\it Permutation polynomials over finite fields --- a survey of recent advances}, Finite Fields Appl. {\bf 32} (2015), 82 -- 119.

\bibitem{Hou-FFA-2015b} 
X. Hou, {\it Determination of a type of permutation trinomials over finite fields, II}, Finite Fields Appl. {\bf 35} (2015), 16 -- 35.

\bibitem{Hou-arXiv:1609.03662}
X. Hou, {\it Permutation polynomials of the form $X^r(a+X^{2(q-1)})$ --- a nonexistence result}, arXiv:1609.03662.


\bibitem{Hou-FFA-2018}
X. Hou, {\it Applications of the Hasse-Weil bound to permutation polynomials}, Finite Fields Appl. {\bf 54} (2018), 113 -- 132.


\bibitem{Lappano-thesis}
S. D. Lappano, {\it Some Results Concerning Permutation Polynomials over Finite Fields}, Ph.D. Dissertation, University of South Florida, 2016.

\bibitem{Lappano-pc}
S. D. Lappano, {\it A family of permutation trinomials over $\f_{q^2}$}, private communication.

\bibitem{Masuda-Rubio-Santiago-arXiv:2009.10851}
A. M. Masuda, I. Rubio, J. Santiago, {\it Permutation binomials of index $q^{e-1}+\cdots+q+1$ over $\Bbb F_{q^e}$}, arXiv:2009.10851.

\bibitem{Park-Lee-BAMS-2001} Y. H. Park and J. B. Lee, {\it Permutation polynomials and group permutation polynomials}, Bull. Austral. Math. Soc. {\bf 63} (2001), 67 -- 74.

\bibitem{Stichtenoth-1993}
H. Stichtenoth, {\it Algebraic Function Fields and Codes},  Springer, Berlin, 1993.

\bibitem{Tu-Zeng-Jiang-Li-DCC-2021}
Z. Tu, X. Zeng, Y. Jiang, Y. Li,
{\it Binomial permutations over finite fields with even characteristic}, Des. Codes Cryptogr. Published Online 11 October 2021.

\bibitem{Wang-LNCS-2007}
Q. Wang, {\it Cyclotomic mapping permutation polynomials over finite fields}, in {\it Sequences, Subsequences, and Consequences}, S.W. Golomb, G. Gong, T. Helleseth, H.-Y. Song, (Eds.), pp. 119 -- 128, Lecture Notes in Comput. Sci., vol. 4893, Springer, Berlin, 2007.

\bibitem{Zieve-PAMS-2009}
M. E. Zieve, {\it On some permutation polynomials over $\Bbb F_q$  of the form $x^r h(x^{(q-1)/d})$}, Proc. Amer. Math. Soc. {\bf 137} (2009), 2209 -- 2216.

\bibitem{Zieve-arXiv1310.0776}
M. E. Zieve, {\it Permutation polynomials on $\Bbb F_q$ induced from R\'edei function bijections on subgroups of $\Bbb F_q^*$}, arXiv:1310.0776.


\end{thebibliography}
\end{document}